\newtheorem{lemma}{Lemma}[section]
\newtheorem{corollary}[lemma]{Corollary}
\newtheorem{theorem}[lemma]{Theorem}
\newtheorem{proposition}[lemma]{Proposition}
\theoremstyle{definition}
\newtheorem{definition}[lemma]{Definition}
\newtheorem{remark}[lemma]{Remark}
\newcommand{\Sphere}{\mathbb{S}^3}  
\newcommand{\sphere}{\mathbb{S}^2}  
\newcommand{\crcle}{\mathbb{S}^1}  
\newcommand{\mb}{\mathbb{M}}	
\newcommand{\kb}{\mathbb{K}}	
\newcommand{\pp}{\mathbb{P}}	
\newcommand{\ts}{\mathbb{T}}	
\newcommand{\disc}{\mathbb{D}} 
\DeclareMathOperator{\Int}{int} 
\DeclareMathOperator*{\homeo}{Homeo} 
\DeclareMathOperator{\Mod}{Mod} 
\DeclareMathOperator{\GL}{GL} 
\begin{document}

\title{The Birman exact sequence for $3$--manifolds}
\author{Jessica E. Banks}
\date{}
\maketitle
\begin{abstract}
We study the Birman exact sequence for compact $3$--manifolds, obtaining a complete picture of the relationship between the mapping class group of the manifold and the mapping class group of the submanifold obtained by deleting an interior point. This covers both orientable manifolds and non-orientable ones.
\end{abstract}


\section{Introduction}

The mapping class group $\Mod(M)$ of a manifold $M$ is the group of isotopy equivalence classes of self-homeomorphisms of $M$. 
In \cite{MR0243519}, Birman gave a relationship (now known as the `Birman exact sequence') between the mapping class group of a manifold $M$ and that of the same manifold with a finite number of points removed, focusing particularly on the case that $M$ is a surface. When only one puncture is added, the result is as follows.

\begin{theorem}[\cite{MR2850125} Theorem 4.6; see also \cite{MR0243519} Theorem 1 and Corollary 1.3, and \cite{MR0375281} Theorem 4.2]\label{birmanthm}
Let $M$ be a compact, orientable surface with $\chi(M)<0$. Choose \(p\in\Int(M)\) and set \(M_p=M\setminus\{p\}\). Then there is an exact sequence
\[
1\to \pi_1(M,p)\to \Mod(M_p)\to\Mod(M)\to 1.
\]
\end{theorem}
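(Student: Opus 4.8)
The plan is to realise the sequence as a segment of the long exact homotopy sequence of the \emph{evaluation fibration}, and then to collapse that sequence using the contractibility of the identity component of the homeomorphism group of a hyperbolic surface. Give the homeomorphism groups the compact--open topology, writing $\homeo(M)$ for the group of all self-homeomorphisms of $M$ and $\homeo(M,p)\subseteq\homeo(M)$ for the subgroup fixing $p$. The evaluation map $\mathrm{ev}\colon\homeo(M)\to M$, $f\mapsto f(p)$, is surjective; by the isotopy extension theorem it admits a local section near $p$ (extend a small isotopy of the point $p$ to an ambient isotopy), and translating this section shows $\mathrm{ev}$ is a fibre bundle with fibre $\mathrm{ev}^{-1}(p)=\homeo(M,p)$.

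Basing everything at $\mathrm{id}_M$, the long exact sequence of this fibration contains
\[
\pi_1(\homeo(M))\to\pi_1(M,p)\xrightarrow{\ \partial\ }\pi_0(\homeo(M,p))\to\pi_0(\homeo(M))\to\pi_0(M).
\]
I would then identify the terms. Since $M$ is connected, $\pi_0(M)$ is trivial, so the map preceding it is onto. By definition $\pi_0(\homeo(M))=\Mod(M)$. Also $\pi_0(\homeo(M,p))\cong\Mod(M_p)$: a proper self-homeomorphism of $M_p$ extends uniquely over the end to a self-homeomorphism of $M$ fixing $p$, and an isotopy of $M_p$ is literally a path in $\homeo(M,p)$, so the two $\pi_0$'s agree. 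Under these identifications the map $\pi_0(\homeo(M,p))\to\pi_0(\homeo(M))$ is the forgetful homomorphism, and $\partial$ is the point-pushing homomorphism $\pi_1(M,p)\to\Mod(M_p)$ (one also checks $\partial$ is a genuine group homomorphism, which is standard).

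The one substantial geometric ingredient is that, because $\chi(M)<0$, the identity component of $\homeo(M)$ is contractible; this is the theorem of Hamstrom (and, in the smooth category, of Earle--Eells), and it is the only place the hypothesis $\chi(M)<0$ is used. In particular $\pi_1(\homeo(M),\mathrm{id}_M)=0$ (this is unaffected by whether one works with the full homeomorphism group or only its orientation-preserving subgroup, since every component is homeomorphic to the identity component). Exactness of the displayed sequence at $\pi_1(M,p)$ then forces $\partial$ to be injective, and the sequence collapses to
\[
1\to\pi_1(M,p)\xrightarrow{\ \partial\ }\Mod(M_p)\to\Mod(M)\to1,
\]
as required.

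I expect the crux to be exactly the contractibility of the identity component: the fibration set-up and the bookkeeping that identifies the homotopy-theoretic terms with mapping class groups are routine, whereas the vanishing of $\pi_1(\homeo(M))$ is a genuine theorem about hyperbolic surfaces, and is precisely what fails for the sphere and the torus (where the identity component deformation retracts onto a nontrivial Lie group) and in the other low-complexity cases excluded by $\chi(M)<0$. A secondary, more technical point to pin down is that $\partial$ coincides with point-pushing and is a homomorphism, but this is treated in the references cited for the statement.
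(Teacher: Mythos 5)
Theorem \ref{birmanthm} is a quoted background result: the paper gives no proof of it, deferring to the cited sources (notably \cite{MR2850125}, Theorem 4.6). Your argument --- the long exact homotopy sequence of the evaluation fibration $\homeo(M,p)\to\homeo(M)\to M$, the identification of $\pi_0(\homeo(M,p))$ with $\Mod(M_p)$ and of $\partial$ with point-pushing, and the collapse of the sequence via the Hamstrom/Earle--Eells theorem that $\homeo_0(M)$ is contractible when $\chi(M)<0$ --- is precisely the standard proof given in that reference, and it is correct; you have also correctly isolated the contractibility statement as the only place $\chi(M)<0$ enters. One small point worth pinning down is the boundary convention: with the paper's definition of $\Mod$ (isotopies not required to fix $\partial M$ pointwise), Hamstrom's theorem applies directly to compact surfaces with boundary, so no separate capping argument is needed. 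It is worth noting the contrast with the body of the paper: in dimension $3$ the contractibility of $\homeo_0$ is unavailable, so the author instead establishes only the exactness of $\pi_1(M,p)\to\Mod(M_p)\to\Mod(M)\to 1$ by elementary point-pushing arguments (Proposition \ref{sequenceprop}) and then controls the failure of injectivity on the left by showing $\ker(\Phi_M)$ lies in the centre of $\pi_1(M)$; your fibration argument explains why no such correction term appears for hyperbolic surfaces.
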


The aim of this paper is to prove the following analogue of this for compact $3$--manifolds.

\begin{restatable*}{theorem}{maintheorem}\label{maintheorem}
Let $M$ be a compact, connected $3$--manifold, not necessarily orientable. Choose \(p\in\Int(M)\) and set \(M_p=M\setminus\{p\}\). Then there exists a short exact sequence
\[
1\to K\to\pi_1(M,p)\to \Mod(M_p)\to\Mod(M)\to 1,
\]
where \(K\) is trivial unless $M$ is prime and is a Seifert bundle defined by an \(\crcle\)--action on \(M\) with fibres the orbits of the action.
In these cases \(K\) is generated by regular fibres of such bundle structures, and \(K\) is equal to 
$K=Z(\pi_1(M))\cap\pi_1^+(M)$, the intersection of the centre of \(\pi_1(M)\) with the subgroup of \(\pi_1(M)\) represented by orientation-preserving loops. 
When \(K\) is non-trivial it is cyclic (finite or infinite according to whether \(\pi_1(M)\) is) apart from the five special cases listed below; then it is \(\mathbb{Z}^2\) except in the last case where it is \(\mathbb{Z}^3\).

The exceptional cases are as follows:
\begin{itemize}
\item $M=\mathbb{T}^2\times I$;
\item $M=\mathbb{M}\times\crcle$ for \(\mathbb{M}\) the Mobius band;
\item \(M=\mathbb{K}\times\crcle\) for \(\mathbb{K}\) the Klein bottle;
\item \(M\) is the mapping torus of the involution of \(\mathbb{K}=\mathbb{M}\cup\mathbb{M}\) that interchanges the two copies of \(\mathbb{M}\).
\item $M=\mathbb{T}^3$. 
\end{itemize}
\end{restatable*}

The proof makes use of some major theorems about \(3\)--manifolds, including the Poincar\'e Conjecture, the Orthogonalisation Theorem, and the Seifert Fibre Space Theorem for both orientable and non-orientable manifolds.
Together these leave as the main outstanding issue the task of understanding the centre of the fundamental group of a Seifert bundle. We use the term `Seifert bundle' in the sense used by Whitten, \cite{MR1194999};
the definition is discussed in Section \ref{bundlessection}.
Work of Epstein shows that a \(3\)--manifold is a Seifert bundle (with this definition) if and only if it is foliated by circles (see \cite{zbMATH03894059} p.\! 429).

If the use of the Poincar\'e Conjecture and the Orthogonalisation Theorem are removed, the proof remains valid for all compact, connected \(3\)--manifolds whose prime connected summands with finite fundamental group are all either \(\Sphere\), \(\pp^2\times I\) or Seifert fibred.

We can consider Theorem \ref{maintheorem} in relation to the extension of Theorem \ref{birmanthm} to all compact connected surfaces. Suppose \(S\) is a compact, connected surface for which the group \(K_S\) given by the analogue of Theorem \ref{maintheorem} is non-trivial. Then for the \(3\)--manifold \(M=S\times\crcle\) the group \(K\) from Theorem \ref{maintheorem} will contain \(K_S\times\mathbb{Z}\). There are four such surfaces: an annulus, a Mobius band, a torus and a Klein bottle. Applying this construction yields four of the exceptional cases in the statement of Theorem \ref{maintheorem}. This final exception is a twisted product of \(\mathbb{K}\) and \(\crcle\).

\medskip

This paper is organised as follows. 
Section \ref{sequencessection}
discusses the existence of the exact sequence, and demonstrates that the centre of the fundamental group is a key object of interest for our purposes.
Section \ref{groupssection}
comprises technical results calculating group centres from presentations.
In Section \ref{bundlessection}
we discuss Seifert bundles and their fundamental groups. The centres of these groups are analysed in Section \ref{centresection}.
Section \ref{proofsection}
completes the proof of Theorem \ref{maintheorem}.

We will use the following notation: $\mathbb{S}^n$ is an $n$--sphere, $\mathbb{D}^n$ is an $n$--disc, $\mathbb{T}^n$ is an $n$--torus, $\mathbb{P}^n$ is real projective $n$--space, $\mb$ is a Mobius band, $\mathbb{K}$ is a Klein bottle, $\mathbb{A}$ is an annulus and $I$ is a closed interval.

\medskip

The idea for this paper was first suggested to me by a referee of a different paper. The development of this paper has also been aided by comments from and discussion with 
Steven Boyer,
Martin Bridson,
Allen Hatcher,
Andy Putman,
Saul Schleimer
and a referee of this paper.

\section{Exact sequences and centres}\label{sequencessection}

\begin{definition}\label{moddefn}
For a manifold $M'$, let $\homeo(M')$ denote the group of homeomorphisms from $M'$ to $M'$, and let $\homeo _0(M')$ be the subgroup of maps isotopic to the identity on $M'$.
Set $\Mod(M')=\homeo(M')/\homeo_0(M')$.
\end{definition}

\begin{remark}
There are a number of related groups that can be defined by placing restrictions on the homeomorphisms considered. A common such restriction, in the case of orientable manifolds, is to ask that all homeomorphisms are orientation-preserving. We will (mostly) not consider these other groups in this paper.
\end{remark}

Let $M$ be a compact, connected $3$--manifold. Fix a point $p\in\Int(M)$, and set \(M_p=M\setminus\{p\}\).
As in the proof of \cite{MR2850125} Theorem 4.6, there is a fibre bundle
\[
\homeo(M_p)\to\homeo(M)\to \Int(M).
\]
The long exact sequence of homotopy groups associated to this fibre bundle includes the sequence
\begin{align*}
\pi_1(\homeo(M))\to\pi_1(\Int(M))&\to\pi_0(\homeo(M_p))\\
&\to\pi_0(\homeo(M))\to\pi_0(\Int(M)).
\end{align*}
Since $M$ is connected, we therefore have the following result.

\begin{proposition}\label{sequenceprop}
There is an exact sequence of groups
\[
\pi_1(M,p)\to \Mod(M_p)\to\Mod(M)\to 1.
\]
\end{proposition}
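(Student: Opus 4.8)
The plan is to build the sequence map by map and check exactness at each of the three non-trivial spots, using the point-pushing machinery of Section \ref{pointpushingsection}. First I would define the map $\pi_1(M,p)\to\Mod(M_p)$: given a loop $\rho$ based at $p$, Definition \ref{homeodefn} produces a homeomorphism $\phi_\rho\colon(M,\partial M)\to(M,\partial M)$ fixing $p$, hence a self-homeomorphism of $M_p$; by Lemma \ref{welldefinedlemma} and Proposition \ref{pwembeddedprop} its class in $\Mod(M_p)$ depends only on the based homotopy class of $\rho$, so this is a well-defined map $\pi_1(M,p)\to\Mod(M_p)$. That it is a homomorphism follows from concatenating the isotopies $H_\rho$, $H_{\rho'}$: the composite ambient isotopy pushes $p$ along $\rho$ then along $\rho'$, so $\phi_{\rho}\circ\phi_{\rho'}$ is isotopic to $\phi_{\rho*\rho'}$ rel $p$. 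The second map $\Mod(M_p)\to\Mod(M)$ is induced by ``filling in the puncture'': a homeomorphism of $M_p=M\setminus\{p\}$ extends (since $M$ is a manifold and $p$ is an interior point, removing a point does not change the homeomorphism type near $p$) to a homeomorphism of $M$ fixing $p$, and isotopic maps extend to isotopic maps, giving a homomorphism $\Mod(M_p)\to\Mod(M)$.

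Next I would verify exactness. Surjectivity of $\Mod(M_p)\to\Mod(M)$: any homeomorphism $\psi$ of $(M,\partial M)$ can, after composing with a point-push $\phi_\rho$ along a path $\rho$ from $\psi(p)$ to $p$ (Corollary \ref{fixpointcor}), be assumed to fix $p$; since $\phi_\rho$ is isotopic to the identity on $M$ by Corollary \ref{inkernelcor}, the class of $\phi_\rho\circ\psi$ in $\Mod(M_p)$ maps to the class of $\psi$ in $\Mod(M)$. Exactness at $\Mod(M_p)$: the composite $\pi_1(M,p)\to\Mod(M_p)\to\Mod(M)$ is trivial because each $\phi_\rho$ is isotopic to the identity on $M$ (Corollary \ref{inkernelcor}), so $\im(\pi_1)\subseteq\ker$; conversely, if $[\psi]\in\Mod(M_p)$ maps to the identity in $\Mod(M)$ then $\psi$ (extended over $p$) is isotopic to the identity on $M$, so by Lemma \ref{inimagelemma} $\psi$ is isotopic rel $p$ to some $\phi_\rho$, i.e. $[\psi]$ lies in the image of $\pi_1(M,p)$.

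The main point requiring care is the well-definedness and independence statements packaged into the cited results — in particular, matching the ``rel $p$'' isotopies used for the $\pi_1$-image with the ``filling in'' isotopies used on the $\Mod(M)$ side, and ensuring that an isotopy of $M$ fixing $p$ throughout restricts to an isotopy of $M_p$. Since Section \ref{pointpushingsection} supplies exactly these facts (Lemmas \ref{welldefinedlemma}, \ref{inimagelemma}, Corollaries \ref{inkernelcor}, \ref{fixpointcor}, Proposition \ref{pwembeddedprop}), the proof here is essentially a bookkeeping assembly of those statements into the three exactness checks above; no new geometric input is needed. I would close by remarking that injectivity of $\pi_1(M,p)\to\Mod(M_p)$ is deliberately \emph{not} asserted — that is precisely the content that fails in general for $3$--manifolds and is addressed by Theorem \ref{kernelthm}.
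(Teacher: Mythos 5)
Your proposal is correct and follows essentially the same route as the paper: the same two maps (point-pushing $\Phi$ and the puncture-filling $\Psi$), with well-definedness from Lemma \ref{welldefinedlemma} and Proposition \ref{pwembeddedprop}, surjectivity from Corollary \ref{fixpointcor}, and exactness at $\Mod(M_p)$ from Corollary \ref{inkernelcor} and Lemma \ref{inimagelemma}. The only cosmetic difference is that the paper takes the representative $\rho$ to be a simple closed curve (as Lemma \ref{inimagelemma} produces), whereas you allow arbitrary loops; your added remark on the homomorphism property is a harmless supplement the paper leaves implicit.
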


Note that the second map is induced by the inclusion $M_p\to M$.
The first map is given by sending a loop $\rho$ in $M$ based at $p$ to the isotopy class of homeomorphisms that results from pushing the point $p$ once around $\rho$. We will denote this map by $\Phi_M$, and write $\phi_{\rho}=\Phi_M(\rho)$ for $\rho\in\pi_1(M)$. Our aim for the rest of this paper is to understand the kernel of $\Phi_M$.

Choose a loop $\rho$ based at $p$ such that $\rho\in\ker(\Phi_M)$. Then there exists an isotopy $H\colon M\times[0,1]\to M$ from the identity map on $M$ to itself such that $\rho(t)=H(p,t)$ for $t\in[0,1]$.
Choose a point $p'\in M$, and let $\rho'$ be the loop given by $\rho'(t)=H(p',t)$ for $t\in[0,1]$. Also choose a path $\sigma$ from $p$ to $p'$. Then $\rho$ is homotopic to $\sigma\cdot\rho'\cdot\sigma^{-1}$.
We can draw several conclusions from this.

\begin{lemma}
If, in Definition \ref{moddefn}, we chose to restrict ourselves to a class of homeomorphisms that all have a common fixed point other than $p$ (for example, taking only homeomorphisms that restrict to the identity on $\partial M$) then it would immediately follow that $\ker(\Phi_M)\cong 1$.
\end{lemma}
\begin{proof}
Let $p'$ be a common fixed point of the homeomorphisms, with $p'\neq p$. Choose $\rho$, $H$ and $\sigma$ as above, and define $\rho'$ accordingly.
Since $\rho'$ is the constant loop, we find that $\rho$ is null-homotopic.
\end{proof}

\begin{lemma}\label{centrelemma}
The kernel of $\Phi_M$ is contained in the centre $Z(\pi_1(M,p))$ of $\pi_1(M,p)$.
\end{lemma}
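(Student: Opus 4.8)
The plan is to show that every element of $\ker(\Phi_M)$ is represented by the trajectory of the point $p$ under a loop of homeomorphisms of $M$ based at the identity, and then to invoke the standard Eckmann--Hilton argument that any such loop represents a central element of $\pi_1(M,p)$.

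First I would produce the loop of homeomorphisms. Let $\rho$ be a simple closed curve in $\Int(M)$ based at $p$ with $[\rho]\in\ker(\Phi_M)$, and let $H_\rho\colon M\times[0,1]\to M$ be an isotopy as in Definition~\ref{homeodefn}, so that $H_\rho(\cdot,0)=\mathrm{id}_M$, $H_\rho(\cdot,1)=\phi_\rho$ and $H_\rho(p,t)=\rho(t)$. Since $[\phi_\rho]$ is trivial in $\Mod(M_p)$, the homeomorphism $\phi_\rho$ (which fixes $p$) is isotopic to the identity through homeomorphisms of $M_p$; the key technical point, discussed below, is that this isotopy may be taken to be an isotopy of $M$ fixing $p$ throughout, giving $G\colon M\times[0,1]\to M$ with $G(\cdot,0)=\phi_\rho$, $G(\cdot,1)=\mathrm{id}_M$ and $G(p,t)=p$ for all $t$. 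Concatenating $H_\rho$ with $G$ then yields an isotopy $F\colon M\times[0,1]\to M$ from $\mathrm{id}_M$ to $\mathrm{id}_M$ whose trajectory $t\mapsto F(p,t)$ of the point $p$ is the concatenation of $\rho$ with a constant path, hence homotopic rel endpoints to $\rho$.

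Next I would run the Eckmann--Hilton square argument. Given any $g\in\pi_1(M,p)$, represented by a loop $\gamma\colon[0,1]\to M$ based at $p$, consider the map $[0,1]^2\to M$ sending $(s,t)\mapsto F(\gamma(s),t)$. On the horizontal edges $t=0$ and $t=1$ this restricts to $\gamma$, since $F(\cdot,0)=F(\cdot,1)=\mathrm{id}_M$; on the vertical edges $s=0$ and $s=1$ it restricts to the trajectory $t\mapsto F(p,t)$, which is homotopic rel endpoints to $\rho$. Reading the boundary of the square therefore gives a based homotopy between $\gamma\cdot\rho$ and $\rho\cdot\gamma$, so $[\gamma]$ and $[\rho]$ commute in $\pi_1(M,p)$. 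As $g$ was arbitrary, $[\rho]\in Z(\pi_1(M,p))$, which is what we want.

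The main obstacle is exactly the technical step invoked above: promoting the isotopy of $M_p$ that witnesses $[\phi_\rho]=1$ to an isotopy of $M$ fixing $p$ (equivalently, one whose trajectory of $p$ is null-homotopic in $M$). This is where one needs the interchangeability of $\Mod(M_p)$ with the group of homeomorphisms of $M$ fixing $p$, taken up to isotopy rel $p$; it relies on the fact that a compact $3$--manifold with one interior puncture has a single standard end $\sphere\times[0,\infty)$, so that homeomorphisms and isotopies of $M_p$ can be straightened near the puncture and extended across $p$. (Conceptually this step is the identification of $\ker(\Phi_M)$ with the image of the connecting map $\pi_1(\homeo(M),\mathrm{id})\to\pi_1(M,p)$ arising from the evaluation fibration $\homeo(M,p)\to\homeo(M)\to M$, after which centrality is automatic.) Everything else in the argument is formal.
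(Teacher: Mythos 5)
Your proposal is correct and follows essentially the same route as the paper: the paper likewise chooses the defining isotopy $H$ for $\phi_\rho$ to be a loop of homeomorphisms of $M$ based at the identity (which is the same technical point you flag, handled there by absorbing the isotopy witnessing $[\phi_\rho]=1$ in $\Mod(M_p)$ into $H$), and then reads off $\sigma\simeq\rho\cdot\phi_\rho(\sigma)\cdot\rho^{-1}=\rho\cdot\sigma\cdot\rho^{-1}$, which is exactly your Eckmann--Hilton square. Your write-up is if anything more explicit about why the isotopy of $M_p$ extends over the puncture.
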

\begin{proof}
Choose $\rho\in\ker(\Phi_M)$, and $\sigma\in\pi_1(M,p)$.
View $\sigma$ as a path from $p$ to $p$. Then we see that $\rho$ is homotopic to $\sigma\cdot\rho\cdot\sigma^{-1}$, so $\rho\cdot\sigma=\sigma\cdot\rho$ in $\pi_1(M,p)$.
\end{proof}

\begin{remark}
We will write $Z_M$ for $Z(\pi_1(M,p))$.
In addition, we will denote by $\pi_1^+(M,p)$ the subgroup of $\pi_1(M,p)$ consisting of orientation-preserving loops, which is $\pi_1(M,p)$ if $M$ is orientable, and has index $2$ in $\pi_1(M,p)$ if $M$ is non-orientable.
\end{remark}

\begin{lemma}\label{orientationpreservinglemma}
$\ker(\Phi_M)\leq\pi_1^+(M,p)$.
\end{lemma}
\begin{proof}
Given $\rho\in\ker(\Phi_M)$, find an isotopy $H\colon M\times[0,1]\to M$ as above. The map $x\mapsto H(x,1)$ is the identity map on $M$, which is an orientation-preserving map. This implies that the loop $\rho$ is orientation-preserving.
\end{proof}

The results discussed so far are independent of the dimension of $M$. From now on we will be using that $M$ is a $3$--manifold.

\begin{proposition}\label{simplebdyprop}
If $M$ has a boundary component that is either $\sphere$ or $\pp^2$ then $\ker(\Phi_M)\cong 1$.
\end{proposition}
\begin{proof}
Let $\rho\in\ker(\Phi)$. Take $p'$ to be a point on a boundary component $S$ of $M$ that is either $\sphere$ or $\pp^2$.
Choose $H$ and $\sigma$ as above, and define $\rho'$ accordingly. Then $\rho$ is homotopic to $\sigma\cdot\rho'\cdot\sigma^{-1}$.
Note that $\rho'$ is an element of $\pi_1(S,p')$.
If $\rho'$ is null-homotopic in $S$ then $\rho$ is null-homotopic in $M$. 
This is necessarily the case if $S$ is $\sphere$.
If $S$ is $\pp^2$ then the other possibility is that $\rho'$ is the orientation-reversing element of $\pi_1(S,p')$.
As $S$ is a boundary component of $M$, it must be two-sided in $M$. Thus the orientation-reversing loop in $S$ is also orientation-reversing in $M$. Lemma \ref{orientationpreservinglemma} therefore tells us that this possibility cannot occur.
\end{proof}

\begin{corollary}\label{nonprimecor}
If $M$ is a non-trivial connected sum then either $Z_M\cong 1$ or $M$ has an $\sphere$ boundary component (and hence $\ker(\Phi_M)\cong 1$).
\end{corollary}
\begin{proof}
Suppose $M$ is a connected sum of $M_1$ and $M_2$, neither of which is \(\Sphere\). Then $\pi_1(M)=\pi_1(M_1)*\pi_1(M_2)$.
If $\pi_1(M_1)\ncong 1\ncong \pi_1(M_2)$ then $Z_M\cong 1$.
Otherwise assume, without loss of generality, that $\pi_1(M_1)\cong 1$.
Using the Poincar\'e Theorem, this implies that $M_1$ has at least one boundary component that is an $\sphere$, and therefore so does $M$.
Combining Lemma \ref{centrelemma} and Proposition \ref{simplebdyprop} shows that $\ker(\Phi_M)\cong 1$.
\end{proof}

\section{Groups}\label{groupssection}

Having concluded that the centre of a fundamental group is of interest, we here 
establish details of the centres of certain groups given by group presentations.
These will be applied in Section \ref{centresection}.

\begin{lemma}\label{cyclicgrouplemma}
If $r$ and $s$ are coprime then the group
\[
\langle a,b\mid ab=ba,a^r=b^s\rangle
\]
is isomorphic to \(\mathbb{Z}\), generated by $a^mb^n$ where $ms+nr=1$.
\end{lemma}
\begin{proof}
There is an isomorphism \(\theta\) to the group \(\langle c\mid \ \rangle\) given by \(\theta(a)=c^s\) and \(\theta(b)=c^r\). The inverse map satisfies \(\theta^{-1}(c)=a^m b^n\).
\end{proof}

\begin{lemma}\label{powertwolemma}
If $r$ and $s$ are coprime then the group
\[
\langle a,b\mid ab=ba,a^{2r}=b^{2s}\rangle
\]
is isomorphic to $\mathbb{Z}\times\mathbb{Z}_2$, generated by $a^{-r}b^{s}$ and $a^{m}b^{n}$ where $ms+nr=1$.
\end{lemma}
\begin{proof}
There is an isomorphism \(\theta\) to the group
\[
\langle c,d\mid c^2=1,cd=dc\rangle
\]
given by \(\theta(a)=c^{-n}d^s\) and \(\theta(b)=c^md^r\). The inverse map satisfies
\(\theta^{-1}(c)=a^{-r}b^s\) and \(\theta^{-1}(d)=a^m b^n\).
\end{proof}

\begin{lemma}\label{grouplemma2}
Let $G$ be the group with presentation
\[\langle a,b,t\mid at=ta,bt=tb,t^r ba=ab\rangle,\]
where $r\neq 0$.
Then $Z(G)=\langle t\rangle\cong\mathbb{Z}$.
\end{lemma}
\begin{proof}
Using the relations, we can rewrite any word in the form $a^x b^y t^z$ for some $x,y,z\in\mathbb{Z}$.
Moreover, 
\[a^{x}b^{y}t^{z}\cdot a^{x'}b^{y'}t^{z'}=a^{x+x'}b^{y+y'}t^{z+z'-r x' y}.\]

Define $\theta\colon G \to \GL(3,\mathbb{R})$ by
\[
\theta(a)=\left(\begin{matrix}1&0&0\\0&1&1\\0&0&1\end{matrix}\right),\quad
\theta(b)=\left(\begin{matrix}1&1&0\\0&1&0\\0&0&1\end{matrix}\right),\quad
\theta(t)=\left(\begin{matrix}1&0&\frac{1}{r}\\0&1&0\\0&0&1\end{matrix}\right).
\]
It is easily verified that this defines a homomorphism.
Then
\[\theta(a^xb^yt^z)=
\left(\begin{matrix}1&0&0\\0&1&x\\0&0&1\end{matrix}\right)
\left(\begin{matrix}1&y&0\\0&1&0\\0&0&1\end{matrix}\right)
\left(\begin{matrix}1&0&\frac{z}{r}\\0&1&0\\0&0&1\end{matrix}\right)
=\left(\begin{matrix}1&y&\frac{z}{r}\\0&1&x\\0&0&1\end{matrix}\right).
\]
From this we see that if $a^xb^yt^z=1$ in $G$ then $x=y=z=0$.
Hence \(a\), \(b\) and \(t\) all have infinite order.

It is clear that if $g\in\langle t\rangle$ then $g\in Z(G)$.
Conversely, let $g=a^{x}b^{y}t^{z}$ be an element of $Z(G)$.
Then, in $G$, 
\begin{align*}
1&= a g a^{-1} g^{-1}\\
&=a^{x+1}b^{y}t^{z}a^{-1}t^{-z}b^{-y}a^{-x}\\
&=a^{x+1}b^{y}a^{-1}b^{-y}a^{-x}\\
&=a^{x+1-1}b^{y-y}t^{-r(-1)(y)}a^{-x}\\
&=t^{r y}.
\end{align*}
Hence $y=0$.
Similarly
$1= b g b^{-1} g^{-1}=t^{-r x}$,
and so $x=0$.
Therefore $g=t^z\in\langle t\rangle$.
\end{proof}

\begin{lemma}\label{grouplemma3}
Let $G$ be the group with presentation
\[\langle a,b,t\mid at=t^{-1}a,bt=t^{-1}b,t^r ba=ab\rangle.\]
Then $Z(G)\cong\mathbb{Z}^2$, generated by $a^2$ and $b^2$ if $r$ is odd and by $a^2$ and $abt^{-r/2}$ if $r$ is even.
\end{lemma}
\begin{proof}
First note that $a^2$, $b^2$ and (if $r$ is even) $abt^{-r/2}$ are central in $G$, and $(abt^{-r/2})^2=ababt^{-r}=aabt^{-r}bt^{-r}=a^2b^2$.

Define $\phi\colon G\to\langle a',b',t'\mid a't'=(t')^{-1}a',b't'=(t')^{-1}b',(t')^{r+2} b'a'=a'b'\rangle$ by $\phi(a)=a't'$, $\phi(b)=b'$, $\phi(t)=t'$. Then $\phi$ is an isomorphism. Hence we may assume that $r>0$.

As in the proof of Lemma \ref{grouplemma2}, we can rewrite any word in the form $a^x b^y t^z$ for some $x,y,z\in\mathbb{Z}$.
Define $\theta\colon G \to \GL(4,\mathbb{R})$ by
\begin{align*}
\theta(a)=\left(\begin{matrix}-1&-1&0&1\\0&-1&0&0\\0&0&1&0\\0&0&0&1\end{matrix}\right)&,\quad
\theta(b)=\left(\begin{matrix}1&0&0&0\\0&1&0&0\\0&0&-1&-1\\0&0&0&-1\end{matrix}\right),\\
\theta(t)=&\left(\begin{matrix}1&0&0&\frac{2}{r}\\0&1&0&0\\0&0&1&0\\0&0&0&1\end{matrix}\right).
\end{align*}
This defines a homomorphism, and
\[\theta(a^xb^yt^z)
=\left(\begin{matrix}(-1)^x&x(-1)^x &0&f(x,y,z)\\0&(-1)^x&0&0\\0&0&(-1)^y&y(-1)^y \\0&0&0&(-1)^y\end{matrix}\right),
\]
where $f(x,y,z)=\frac{2z}{r}(-1)^x+\frac{1}{2}(-1)^y(1-(-1)^x)$.
From this we see that if $a^xb^yt^z=1$ in $G$ then $x=y=z=0$.
Moreover, 
\begin{align*}
\theta(a^xb^yt^z\cdot& a^{x'}b^{y'} t^{z'})
=\\
&\left(\begin{matrix}(-1)^{x+x'}&(x+x')(-1)^{x+x'} &0&g(x,y,z,x',y',z')
\\0&(-1)^{x+x'}&0&0\\0&0&(-1)^{y+y'}&(y+y')(-1)^{y+y'} \\0&0&0&(-1)^{y+y'}\end{matrix}\right),
\end{align*}
where \(g(x,y,z,x',y',z')=(-1)^xf(x',y',z')+(-1)^{y'}f(x,y,z)\).
Hence $a^xb^yt^z$ is central if and only if $g(x,y,z,x',y',z')=g(x',y',z',x,y,z)$ for all choices of $(x',y',z')$.
Varying $z'$ shows that this implies
$\frac{2}{r}((-1)^{x+x'}-(-1)^{y+x'})=0$, so $x+y$ must be even. In particular, \((-1)^x=(-1)^y\).
Setting $x'=0$ and $y'=1$ then gives the equation
$(-1)^x(\frac{8z}{r}+2)=2$.
If $x$ is even, this implies that $z=0$. On the other hand, if $x$ is odd this implies that $z=-\frac{r}{2}$, which is only possible if $r$ is even. From this we see that $Z(G)$ is generated by $a^2$, $b^2$ and (if $r$ is even) $abt^{-r/2}$.

We can see that any non-trivial element in the subgroup generated by \(a^2\) and \(b^2\) has infinite order, so this subgroup is a copy of \(\mathbb{Z}^2\).
The same is true for \(a^2\) and \(abt^{-r/2}\) when \(r\) is even, since
\[
\theta\left((a^2)^x(abt^{-r/2})^y
\right)=
\begin{pmatrix}
(-1)^y&(2x+y)(-1)^y&0&0\\
0&(-1)^y&0&0\\
0&0&(-1)^y&y(-1)^y \\
0&0&0&(-1)^y
\end{pmatrix}.\qedhere
\]
\end{proof}

\begin{lemma}\label{grouplemma7}
Let \(G\) be the group with presentation
\[\langle a,b,t\mid at=ta,bt=tb,t^r =a^2b^2\rangle.\]
Then \(Z(G)\cong\mathbb{Z}^2\), generated by \(t\) and \(a^2\), or alternatively by \(t\) and \(b^2\).
\end{lemma}
\begin{proof}
Note that $t$ and $a^2$ are central, 
and
\begin{align*}
G/\langle t,a^2\rangle=& \langle a,b\mid 1= a^2=b^2\rangle,
\end{align*}
which 
has no centre.
Thus $Z(G)$ is generated by $t$ and $a^2$, or alternatively by $t$ and $b^2$. 
There is a homomorphism \(\theta\colon G\to\GL(5,\mathbb{R})\) given by
\[
\theta(a)=
\begin{pmatrix}
-1&0&0&0&1\\
0&1&0&0&\frac{r}{2}\\
0&0&1&0&0\\
0&0&0&1&0\\
0&0&0&0&1
\end{pmatrix}, \quad
\theta(b)=
\begin{pmatrix}
1&0&0&0&1\\
0&1&0&0&0\\
0&0&-1&0&0\\
0&0&0&1&\frac{r}{2}\\
0&0&0&0&1
\end{pmatrix},
\]
\[
\theta(t)=
\begin{pmatrix}
1&0&0&0&0\\
0&1&0&0&1\\
0&0&1&0&0\\
0&0&0&1&1\\
0&0&0&0&1
\end{pmatrix}.
\]
Any element of \(Z(G)\) can be written as \(a^{2x}t^y\) for some \(x,y\in\mathbb{Z}\).
Since
\[
\theta\left( a^{2x} t^y \right)=
\begin{pmatrix}
1&0&0&0&0\\
0&1&0&0&x+y\\
0&0&1&0&0\\
0&0&0&1&y\\
0&0&0&0&1
\end{pmatrix},
\]
we see that \(Z(G)\cong\mathbb{Z}^2\).
\end{proof}

\begin{lemma}\label{grouplemma4}
If \(r\neq 0\) then the group \(G\) with presentation
\[\langle a,b,t\mid at=t^{-1}a, bt=t^{-1}b,t^r=a^2b^2\rangle\]
has no centre.
\end{lemma}
\begin{proof}
First note that the subgroup \(H\) generated by \(t\) and \(a^2\) is normal in \(G\), and
\[G/H=\langle a,b\mid 1=a^2=b^2\rangle,\]
which has no centre. Hence \(Z(G)\leq H\).

Any element of \(H\) can be expressed as \(a^{2x}t^z\) for some \(x,z\in\mathbb{Z}\).
There is a homomorphism \(\theta\colon G\to \GL(4,\mathbb{R})\) given by
\begin{align*}
\theta(a)=\left(\begin{matrix}-1&-1&-1&0\\0&-1&0&0\\0&0&1&-1\\0&0&0&1\end{matrix}\right)&,\quad
\theta(b)=\left(\begin{matrix}1&-1&0&0\\0&1&0&-1\\0&0&-1&-1\\0&0&0&-1\end{matrix}\right),\\
\theta(t)=&\left(\begin{matrix}1&0&0&\frac{2}{r}\\0&1&0&0\\0&0&1&0\\0&0&0&1\end{matrix}\right),
\end{align*}
and \(\theta(a^{2x}t^z)=I_4\) only if \(x=z=0\).
If \(a^{2x}t^z\) commutes with \(a\) then \(z=0\).
On the other hand, if \(\theta(a^{2x})\) commutes with \(\theta(b)\) then \(x=0\).
Thus \(Z(G)\cong 1\).
\end{proof}

\begin{lemma}\label{grouplemma5}
The group \(G\) with presentation
\[\langle a,b,t\mid at=t^{-1}a, bt=t^{-1}b,a^2=b^2=1\rangle\]
has no centre.
\end{lemma}
\begin{proof}
Every element of \(G\) can be expressed as
\(a^{\varepsilon}(ba)^x b^{\eta}t^z\) for some \(x\in\mathbb{Z}_{\geq 0}\), \(z\in\mathbb{Z}\) and \(\varepsilon,\eta\in\{0,1\}\).
There is a homomorphism \(\theta\colon G\to\GL(3,\mathbb{R})\) given by
\[
\theta(a)=\left(\begin{matrix}-1&0&0\\ 0&-1&0\\ 0&0&1\end{matrix}\right),\quad
\theta(b)=\left(\begin{matrix}-1&0&1\\ 0&-1&0\\ 0&0&1\end{matrix}\right),\quad
\theta(t)=\left(\begin{matrix}1&0&0\\ 0&1&1\\ 0&0&1\end{matrix}\right).
\]
Now
\(\theta(a^{\varepsilon}(ba)^x b^{\eta}t^z)=I_3\) only if \(\varepsilon=\eta=x=z=0\).
It follows that \(a^{\varepsilon}(ba)^x b^{\eta}t^z\) commutes with \(b\) only if \(\varepsilon=x=z=0\). Since \(\theta(b)\) does not commute with \(\theta(t)\), we see that \(Z(G)\cong 1\).
\end{proof}

\begin{lemma}\label{grouplemma6}
The group \(G\) with presentation
\[\langle a,b,t\mid a t=ta, bt=t^{-1} b, t^r=a^2, b^2=1\rangle\]
has no centre.
\end{lemma}
\begin{proof}
Any element of \(G\) can be expressed as \(a^{\varepsilon}(ba)^xb^{\eta}t^z\) for some \(x\in\mathbb{Z}_{\geq 0}\), \(z\in\mathbb{Z}\) and \(\varepsilon,\eta\in\{0,1\}\).
There is a homomorphism \(\theta\colon G\to\GL(3,\mathbb{R})\) given by
\[
\theta(a)=\left(\begin{matrix}-1&0&0\\0&1&r\\0&0&1\end{matrix}\right),\quad
\theta(b)=\left(\begin{matrix}-1&0&1\\0&-1&0\\0&0&1\end{matrix}\right),\quad
\theta(t)=\left(\begin{matrix}1&0&0\\0&1&2\\ 0&0&1\end{matrix}\right),
\]
and \(\theta(a^{\varepsilon}(ba)^xb^{\eta}t^z)=I_3\) only if \(x=z=\varepsilon=\eta=0\).
Now, if \(a^{\varepsilon}(ba)^xb^{\eta}t^z\) commutes with \(a\) then \(x=\eta=0\).
On the other hand, \(\theta(a^{\varepsilon}t^z)\) does not commute with \(\theta(b)\) unless \(\varepsilon=z=0\). Hence \(Z(G)\cong 1\).
\end{proof}

\begin{lemma}\label{equalsquareslemma}
Let \(G\) be the group with presentation
\[
\langle a,b,t\mid ta=a^{-1}t,tb=b^{-1}t,a^2=b^2\rangle.
\]
Then \(Z(G)\cong\mathbb{Z}\), generated by \(t^2\).
\end{lemma}
\begin{proof}
Note that \(t^2\in Z(G)\).
Further, the subgroup generated by \(t^2\) and \(ab^{-1}\) is normal in \(G\), and
\[
G/\langle t^2,ab^{-1}\rangle=\langle a,t\mid ta=a^{-1}t,t^2=1\rangle,
\]
which has no centre.
It remains only to check whether \((ab^{-1})^n\) is central for any \(n\in\mathbb{Z}\).

If \((ab^{-1})^n\) commutes with \(a\) then \((ab^{-1})^{2n}=1\).
On the other hand, there is a homomorphism \(\theta\colon G\to\GL(4,\mathbb{R})\) given by
\[
\theta(a)=\begin{pmatrix}
1&0&0&1\\
0&-1&0&0\\
0&0&1&0\\
0&0&0&1
\end{pmatrix}, \;
\theta(b)=
\begin{pmatrix}
1&0&0&1\\
0&-1&0&1\\
0&0&1&0\\
0&0&0&1
\end{pmatrix}, \;
\theta(t)=
\begin{pmatrix}
-1&0&0&0\\
0&1&0&0\\
0&0&1&1\\
0&0&0&1
\end{pmatrix}.
\]
Since 
\[
\theta(ab^{-1})=
\begin{pmatrix}
1&0&0&0\\
0&1&0&-1\\
0&0&1&0\\
0&0&0&1
\end{pmatrix},
\]
we see that \(ab^{-1}\) has infinite order.
Hence \(Z(G)\) is generated by \(t^2\), which also has infinite order.
\end{proof}

\begin{lemma}\label{squarereverselemma}
The group \(G\) with presentation
\[\langle a,b\mid a^2=1, ab^r=b^{-r}a\rangle\]
has no centre.
\end{lemma}
\begin{proof}
Any element of \(G\) can be expressed as \(a^{\varepsilon}b^y\) for some \(y\in\mathbb{Z}\) and \(\varepsilon\in\{0,1\}\).
There is a homomorphism \(\theta\colon G\to\GL(3,\mathbb{C})\) given by
\[
\theta(a)=\begin{pmatrix}
1&0&0\\
0&0&i\\
0&-i&0
\end{pmatrix}, \quad
\theta(b)=
\begin{pmatrix}
1&i&1\\
0&1&0\\
0&0&1
\end{pmatrix},
\]
and \(\theta(a^{\varepsilon}b^y)=I_3\) only if \(y=\varepsilon=0\).
Moreover, \(\theta(a^{\varepsilon}b^y)\) commutes with \(\theta(a)\) only if \(y=0\). Since \(\theta(a)\) does not commute with \(\theta(b)\), this implies that \(Z(G)\cong 1\).
\end{proof}

\section{Seifert bundles}\label{bundlessection}

In proving Theorem \ref{maintheorem}
the key class of manifolds to consider will be Seifert bundles. In this section we consider these manifolds and their fundamental groups. In Section \ref{centresection} we will study \(Z_M\) and \(Z_M\cap\pi_1^+(M)\) when \(M\) is a Seifert bundle.

\subsection{Seifert fibred spaces}

Seifert fibred spaces were first defined by Seifert (\cite{zbMATH02547002}, translated in \cite{zbMATH03736614}) in 1933, and they have since been extensively studied. See also \cite{MR0415619} or \cite{MR565450}, for example, for a more general discussion. These spaces play a key role in the JSJ decomposition and the Geometrization Theorem.

A \textit{fibred solid torus} is a solid torus expressed as a union of circles (called \textit{fibres}), given by taking a disc $\disc^2$, viewing $\disc^2\times [0,1]$ as the union of arcs $\{q\}\times [0,1]$, and then gluing $\disc^2\times\{0\}$ to $\disc^2\times\{1\}$ by a rotation of $2\pi r/s$ for some $r,s\in\mathbb{Z}$ (which we assume to be coprime with $s\geq 1$). 
A fibre in the boundary of the solid torus therefore forms a curve on the boundary torus with a slope that runs $s$ times around the longitude and $r$ times around the meridian.
The central fibre is said to be \textit{exceptional} if \(r\neq 0\) and $s>1$, and any fibre that is not exceptional is called \textit{regular}. That is, a fibre is regular if it has a neighbourhood that is a fibred solid torus with $s=1$, and is exceptional if no such neighbourhood exists.

A compact, connected $3$--manifold $M'$ is \textit{Seifert fibred} if it can be expressed as a union of circles such that each fibre has a neighbourhood of fibres that is a fibred solid torus. Note that a fibre in $\partial M'$ will lie on the boundary of such a solid torus neighbourhood.
Since $M'$ is compact, and exceptional fibres are isolated, there are only finitely many exceptional fibres in $M'$.

Collapsing each fibre to a point gives a $2$--orbifold that is a surface $S'$ with marked points denoting the exceptional fibres. We call this orbifold the \textit{base space}. The boundary of $S'$ is formed of the fibres in $\partial M'$.
Given a simple closed curve $\rho$ in $S'$ (away from the marked points), the union of the corresponding fibres in $M'$ is a closed embedded surface in $M'$. This surface is a union of annuli, and so is either a torus or a Klein bottle. We say that $\rho$ \textit{preserves the fibre} if the surface is a torus, and that it \textit{reverses the fibre} if the surface is a Klein bottle. 

If every loop in $S'$ preserves the fibre then we can give a coherent orientation to all the fibres, and from this we can construct an $\crcle$--action on $M'$. This action induces an isotopy from the identity map on $M'$ to itself that takes a point $p'$ in a regular fibre of $M'$ once around that fibre.

\begin{proposition}[\cite{MR0415619} Theorem 12.1 (see also \cite{MR0426001} Section 5.3, \cite{MR565450} VI.9 and VI.10)]\label{pi1prop}
Suppose that $M$ is Seifert fibred with base space $S$.
If $S$ is orientable with genus $x$, $y$ boundary components and $z$ marked points then $\pi_1(M)$ is given by the presentation
\begin{align*}
\pi_1(M)=& \langle a_1,b_1,\ldots,a_x,b_x,c_1,\ldots,c_z,d_1,\ldots,d_y,t\mid\\
& a_i t=t^{\delta_i} a_i,b_i t=t^{\varepsilon_i} b_i, c_j t=t c_j, d_k t=t^{\zeta_k} d_k,\\
& c_j^{r_j}=t^{s_j}, t^r=a_1b_1a_1^{-1}b_1^{-1} a_2\cdots a_x^{-1}b_x^{-1}c_1\cdots c_z d_1\cdots d_y\rangle
\end{align*}
for some $r, r_j, s_j \in\mathbb{Z}$ and $\delta_i, \varepsilon_i,\zeta_k\in\{\pm 1\}$.

If instead $S$ is non-orientable with $x$ cross-caps, $y$ boundary components and $z$ marked points then $\pi_1(M)$ is given by the presentation
\begin{align*}
\pi_1(M)=& \langle a_1,\ldots,a_x,c_1,\ldots,c_z,d_1,\ldots,d_y,t\mid\\
& a_i t=t^{\delta_i} a_i, c_j t=t c_j, d_k t=t^{\zeta_k} d_k,\\
& c_j^{r_j}=t^{s_j}, t^r=a_1^2 a_2^2\cdots a_x^2 c_1\cdots c_z d_1\cdots d_y\rangle
\end{align*}
for some $r, r_j, s_j \in\mathbb{Z}$ and $\delta_i,\zeta_k\in\{\pm 1\}$.

In each case, $t$ represents a regular fibre in $M$. 
If the loop $a_i$ in $S$ preserves the fibre then $\delta_i=1$. Conversely, if $a_i$ reverses the fibre then $\delta_i=-1$. 
Similarly, $\varepsilon_i=1$ if $b_i$ preserves the fibre, and $\varepsilon_i=-1$ if $b_i$ reverses the fibre.
Likewise, $\zeta_k=1$ if $d_k$ preserves the fibre, and $\zeta_k=-1$ if \(d_k\) reverses the fibre.
\end{proposition}

\begin{remark}
As no trivial loop in \(S\) can reverse the fibre, \(\prod \zeta_k=1\). 
Accordingly, if the manifold \(M\) has boundary (so \(y\geq 1\)), the presentation can be simplified by removing the generator \(d_y\) and the two relations in which it appears.
\end{remark}

\begin{remark}
If $\delta_i=\varepsilon_i=\zeta_k=1$ for each $i$ and $k$ then $t$ is central in $\pi_1(M)$. On the other hand, if either $\delta_i=-1$ or $\varepsilon_i=-1$ for some $i$ or $\zeta_k=-1$ for some $k$ then $t^n$ is not central in $\pi_1(M)$ for any $n\neq 0$ unless $t$ has finite order. Thus either $\langle t\rangle\leq Z_M$ or $\langle t\rangle\cap Z_M\cong 1$, if $t$ has infinite order.
\end{remark}

\begin{definition}[\cite{MR0426001} Section 5.3]
A Seifert fibred space $M'$ with base space $S'$ is said to be \textit{small} if one of the following holds:
\begin{itemize}
\item $S'$ is $\sphere$ ($M'$ is therefore orientable) with at most two marked points;
\item $S'$ is $\sphere$ 
with three marked points\footnote{The definition in \cite{MR0426001} only calls these spaces small if $\pi_1(M')$ is finite. The standard terminology has since changed.}; 
\item $S'$ is $\sphere$ with four marked points, and $M'$ is the result of doubling (along its one toral boundary component) the orientable $\crcle$--bundle over a Mobius band; 
\item $S'$ is $\ts^2$ with no marked points; 
\item $S'$ is $\pp^2$ with at most one marked point;
\item $S'$ is $\kb$ with no marked points.
\end{itemize}
If $M'$ is not small then it is said to be \textit{large}.
\end{definition}

\begin{proposition}[\cite{MR0426001} Section 5.3 Lemma 1]\label{largenormalprop}
Assume $M$ is Seifert fibred and closed. Let $t$ be the regular fibre. If $M$ is large then $\langle t\rangle$ is the unique maximal normal subgroup of $\pi_1(M)$ and $t$ has infinite order.
\end{proposition}

\subsection{Seifert bundles}\label{bundlessubsection}

For our purposes, we will need to consider a more general class of manifolds. A \textit{fibred solid Klein bottle} is analogous to a fibred solid torus, with $\disc^2\times\{0\}$ glued to $\disc^2\times\{1\}$ via a reflection rather than a rotation. The \textit{exceptional} fibres are those that pass through the fixed-point set of the reflection. Within the solid Klein bottle, the union of the exceptional fibres is a properly-embedded one-sided annulus.

We wish to generalise the definition of Seifert fibred to allow for fibred neighbourhoods that are fibred solid Klein bottles rather than fibred solid tori.
As previously mentioned, we will follow the convention in \cite{MR3178943} and \cite{MR1194999}, and call a compact $3$--manifold $M'$ expressed in this way a \textit{Seifert bundle}. 

The union of the exceptional fibres that have solid Klein bottle neighbourhoods is a properly-embedded surface $T'$ in $M'$. Each component of this surface is one-sided in $M'$, and is either an annulus, a torus, or a Klein bottle.

The base space $S'$ is now a surface with marked points and mirrors, where the mirrors denote the fibres with solid Klein bottle neighbourhoods. These mirrors lie on the boundary of the surface $S'$. A torus or Klein bottle of exceptional fibres will form a single boundary component of $S'$. Each properly-embedded annulus of exceptional fibres will form a (closed) sub-arc of a boundary component of $S'$. 
For a simple closed curve mirror, we say that the mirror \textit{preserves the fibre} if the corresponding component of $T'$ is a torus, and that it \textit{reverses the fibre} if the corresponding component of $T'$ is a Klein bottle.

Cutting the Seifert bundle $M'$ along the surface $T'$ will yield a Seifert fibred space $M''$. The base space $S''$ of $M''$ is given by simply forgetting the mirrors on $S'$.
By considering the effect of re-gluing the components of $T'$, we can find a presentation for the fundamental group of $M'$.

Let $M_1$ be a Seifert bundle, and let $T$ be a connected, properly embedded surface in $M_1$ formed of exceptional fibres. In addition, let $M_2$ be the Seifert bundle that results from cutting $M_1$ along $T$. We can calculate $\pi_1(M_1)$ in terms of a presentation $\langle G\mid R\rangle$ for $\pi_1(M_2)$. 

First assume that $T$ is an annulus. A regular neighbourhood of $T$ is a twisted $I$--bundle over an annulus, which is a solid Klein bottle. The boundary Klein bottle of this is divided into an annulus $T^*$ and two Mobius bands, where the Mobius bands are contained within $\partial M_1$ and $T^*$ is properly embedded in $M_1$.

Note that $T^*$ is separating in $M_1$, and $M_2$ is given by deleting the component of $M_1\setminus T^*$ that contains $T$. That is, we may form $M_1$ by attaching the fibred neighbourhood of $T$ to $M_2$ along $T^*$. The Seifert--van-Kampen Theorem gives that $\pi_1(M_1)$ is the pushout of $\langle G\mid R\rangle$ and $\langle e\mid\ \rangle$ along $\langle h\mid\ \rangle$ with the inclusion maps $h\mapsto e^2$ and $h\mapsto t$, where $t$ denotes a regular fibre in $M_2$.
This gives the presentation
\[
\langle G,e\mid R, e^2=t\rangle.
\]
That is, the presence of the annulus $T$ adds a new generator to $\pi_1(M_1)$ whose square is the regular fibre.

Next suppose instead that $T$ is a torus. A fibred regular neighbourhood of $T$ is a twisted $I$--bundle over a torus, and is bounded by a torus $T^*$ that is separating in $M_1$. In this case we find that $\pi_1(M_1)$ is the pushout of $\langle G\mid R\rangle$ and $\langle f,g\mid fgf^{-1}g^{-1}\rangle$ along $\langle h_1,h_2\mid h_1h_2h_1^{-1}h_2^{-1} \rangle$ with the inclusion maps $h_1\mapsto f, h_2\mapsto g^2$ and $h_1\mapsto d_n,h_2\mapsto t$, where $d_n$ denotes the loop that runs around the boundary component of the base space of $M_2$ corresponding to $T^*$. Note that, since $T^*$ is a torus, $\zeta_n=1$. 
This gives the presentation
\[
\langle G,f,g\mid R,fg=gf,d_n=f,g^2=t\rangle.
\]
The effect of re-gluing $T$, therefore, is to rename one generator coming from a boundary component of $M_2$, add a new generator whose square is the regular fibre, and make these two generators commute. It is worth noting that the re-gluing map depends only on the slope of the fibres on $T$, and not on any choice of generating set for $\pi_1(T)$.

Finally, suppose that $T$ is a Klein bottle. A fibred regular neighbourhood of $T$ is then a twisted $I$--bundle over a Klein bottle; 
the core curve of one cross-cap preserves the fibre and the other reverses it.
The boundary $T^*$ of this is a Klein bottle. This time $\pi_1(M_1)$ is the pushout of $\langle G\mid R\rangle$ and $\langle f,g\mid fgf^{-1}g\rangle$ along $\langle h_1,h_2\mid h_1h_2h_1^{-1}h_2 \rangle$ with the inclusion maps $h_1\mapsto f, h_2\mapsto g^2$ and $h_1\mapsto d_n,h_2\mapsto t$. In this case, as $T^*$ is a Klein bottle, $\zeta_n=-1$.
This gives the presentation
\[
\langle G,f,g\mid R,fg=g^{-1}f,d_n=f,g^2=t\rangle.
\]
The effect of re-gluing $T$, therefore, is to rename one generator coming from a boundary component of $M_2$, add a new generator whose square is the regular fibre, and make the first of these generators conjugate the second to its inverse. Again, the definition of the re-gluing map depends only on the slope of the fibres on $T$.

Beginning with Proposition \ref{pi1prop} and applying these calculations inductively gives the following result.

\begin{proposition}\label{sbpi1prop}
Suppose $M$ is a Seifert bundle with base space $S$.
If $S$ is orientable, and has genus $x$, $y$ boundary components that are not entirely mirrors, $z$ marked points, $v$ arc mirrors and $w$ simple closed curve mirrors then
\begin{align*}
\pi_1(M)=& \langle a_1,b_1,\ldots,a_x,b_x,c_1,\ldots,c_z,d_1,\ldots,d_y,e_1,\ldots e_v, f_1,g_1,\ldots,f_w,g_w, t\mid\\
& a_i t=t^{\delta_i} a_i,b_i t=t^{\varepsilon_i} b_i, c_j t=t c_j, d_k t=t^{\zeta_k} d_k,\\
& c_j^{r_j}=t^{s_j}, e_l^2=t, g_m^2=t, f_m g_m=g_m^{\eta_m} f_m,\\
& t^r=a_1b_1a_1^{-1}b_1^{-1} a_2\cdots a_x^{-1}b_x^{-1}c_1\cdots c_z d_1\cdots d_y f_1\cdots f_w\rangle.
\end{align*}
If $S$ is non-orientable, and has $x$ cross-caps, $y$ boundary components that are not entirely mirrors, $z$ marked points, $v$ arc mirrors, and $w$ simple closed curve mirrors then
\begin{align*}
\pi_1(M)=& \langle a_1,\ldots,a_x,c_1,\ldots,c_z,d_1,\ldots,d_y,e_1,\ldots e_v, f_1,g_1\ldots,f_w,g_w,t\mid\\
& a_i t=t^{\delta_i} a_i, c_j t=t c_j, d_k t=t^{\zeta_k} d_k,\\
& c_j^{r_j}=t^{s_j},e_l^2=t,g_m^2=t, f_m g_m=g_m^{\eta_m} f_m,\\ 
& t^r=a_1^2 a_2^2\cdots a_x^2 c_1\cdots c_z d_1\cdots d_y f_1\cdots f_w\rangle.
\end{align*}
\end{proposition}

\begin{remark}\label{orientremark}
Here each $\delta_i$, $\varepsilon_i$, $\zeta_k$ or $\eta_m$ is $1$ if the corresponding curve in $S$ preserves the fibre and is $-1$ if the curve reverses the fibre. Moreover, because a trivial loop in $S$ can never reverse the fibre, $\prod \zeta_k \prod \eta_m =1$.

The relations $g_m^2=t$ and $f^{}_mg^{}_m=g_m^{\eta_m}f^{}_m$ ensure that $g^{}_m t g_m^{-1}=t$ and $f^{}_m t f_m^{-1}=t^{\eta_m}$ for each $m$.
Similarly, $e^{}_l t e_l^{-1}=t$ for each $l$.
We therefore see that $t$ generates a normal subgroup of $\pi_1(M)$, as for Seifert fibred spaces.
Moreover, $t\in Z_M$ if every $\delta_i$, $\varepsilon_i$, $\zeta_k$ and $\eta_m$ is $1$ (that is, if every loop in the base space preserves the fibre). On the other hand, if some $\delta_i$, $\varepsilon_i$, $\zeta_k$ or $\eta_m$ is $-1$ then either $\langle t\rangle\cap Z_M\cong 1$ or $t$ has finite order.

As for Seifert fibred manifolds, if every $\delta_i$, $\varepsilon_i$, $\zeta_k$ and $\eta_m$ is $1$ then we can give a coherent orientation to all the fibres, and from this we can construct an $\crcle$--action on $M$. Moreover, this action induces an isotopy from the identity map on $M$ to itself that takes a point of a regular fibre once around that fibre.
\end{remark}

\begin{remark}
As with Proposition \ref{pi1prop}, if \(M\) is not closed we may simplify the presentation by removing \(d_y\) and the two relations in which it appears.
\end{remark}

In order to understand some of the results in Section \ref{centresection}, it is helpful to study more precisely the gluing maps we have considered above. 

First suppose that the component $T$ along which we cut $M_1$ is a Klein bottle. The corresponding boundary component, $T^*$, of $M_2$ is also a Klein bottle, and is fibred by the Seifert bundle structure on $M_2$. The gluing map is given by taking each fibre and identifying points that are diametrically opposed. 

Now, up to isotopy, there are only four non-trivial simple closed curves on a Klein bottle. One is separating, dividing \(\kb\) into two Mobius bands. Two more are disjoint from this one and from each other, and are the core curves of the two Mobius bands. These are non-separating and orientation-reversing in \(\kb\). The final curve is non-separating and orientation-preserving in \(\kb\). It intersects the first curve twice, and the other two each once.

The construction of a fibred solid Klein bottle implies that the fibres on $T^*$ are non-separating and orientation-preserving in $T^*$. Thus the fibres must be in the isotopy class of the final curve we identified on the Klein bottle. The gluing map identifies the other two non-separating curves. In other words, $M_1$ can be defined from $M_2$ by simply specifying which boundary component of the base space is to become a mirror; no further details of the gluing map are required.

Suppose instead that $T$ is a torus. Then $T^*$ is a torus, again fibred by the fibres of the Seifert bundle structure on $M_2$. This time we do not have a-priori knowledge of the slope of the fibre. Never-the-less, given a fixed choice of a meridian-longitude pair $(\mu,\lambda)$ on $T^*$, there are not many choices for the gluing map. This can be seen as follows. If the fibre has slope $(r,s)$ (with $r$ and $s$ coprime), the gluing map can be viewed as translation by the vector $\frac{1}{2}(r,s)$ in the universal cover $\mathbb{R}^2$ of $T^*$. Once this is considered in the torus, rather than the universal cover, we see that there are only three options for this translation, given by the parity of each of $r$ and $s$. To re-iterate, although there are infinitely many possibilities for the slope $(r,s)$, for the purposes of defining the correct gluing map it is sufficient to assume that $(r,s)\in\{(0,1),(1,0),(1,1)\}$. We can further restrict these options if we have additional information connecting the slope of the fibre to the chosen pair $(\mu,\lambda)$. For example, if the fibre slope intersects $\mu$ once then we can reduce to considering $(r,s)\in\{(0,1),(1,1)\}$.

Let us now compare this with a related situation. Suppose that, instead of being formed of exceptional fibres, the torus $T$ along which we have cut is formed of the regular fibres over a simple closed curve $\rho$ in the base space. By definition, $\rho$ therefore preserves the fibre. Assume further that $\rho$ is orientation-reversing in the base space. With these assumptions, cutting $M_1$ along $T$ again results in a single new torus boundary component $T^*$ in $M_2$. The gluing map needed to return from $M_2$ to $M_1$ has the same form as those we have just considered. That is, the map can be viewed as a translation by a vector $\frac{1}{2}(r,s)$ in the universal cover of $T^*$. The difference here though is that, whereas previously the translation vector was given by the slope of the fibre, this time the translation is along a different slope on $T^*$ that intersects the fibre on $T^*$ once.

From this we see that, at least a-priori, it is possible that a torus in a manifold $M$ may consist of regular fibres in one Seifert bundle structure on $M$ and consist of exceptional fibres in an alternative Seifert bundle structure on $M$. We will see examples in Section \ref{centresection} where this does in fact occur.

A natural question is whether the same can be said of a Klein bottle within $M$. However, cutting a Seifert bundle along a Klein bottle of regular fibres gives a new boundary component that is a torus, as opposed to the Klein bottle that results from cutting a Klein bottle of exceptional fibres.

\section{The centre of $\pi_1(M)$ for a Seifert bundle}\label{centresection}

We now recall some results about finite-order elements of $\pi_1(M)$ that we will need in the following sections.

\begin{theorem}[\cite{MR0415619} Corollary 9.9]\label{finiteorderthm}
If $M$ is prime and $\pi_1(M)$ is infinite then any (non-trivial) element of finite order in $\pi_1(M)$ has order $2$. If $M$ contains no two-sided $\pp^2$ then $\pi_1(M)$ contains no non-trivial elements of finite order.
\end{theorem}

The following result follows from the proof of the Geometrization Conjecture, using earlier work on elliptic manifolds. The non-orientable case relies on work of Epstein, \cite{zbMATH03180034}. 
See also \cite{zbMATH06472948} Theorem 1.12 and the following discussion.

\begin{theorem}[\cite{MR3178943} Theorem 8]\label{finitepi1thm}
Suppose $M$ has no spherical boundary components, and that $\pi_1(M)$ is finite. If $M$ is orientable then it is Seifert fibred and closed. If $M$ is non-orientable then it is $\pp^2\times[0,1]$.
\end{theorem}

Note that $\pp^2\times[0,1]$ is not a Seifert bundle, as the boundary of a Seifert bundle is formed of tori and Klein bottles.

Work of Tollefson (\cite{zbMATH03461149}) gives the following result.

\begin{theorem}[\cite{MR3178943} Theorem 2]\label{rp2thm}
Suppose that $M$ is a Seifert bundle. If $M$ is reducible then $M$ is either $\pp^3\#\pp^3$ or an $\sphere$--bundle over $\crcle$. If $M$ is irreducible but contains a two-sided $\pp^2$ then $M$ is $\pp^2\times\crcle$.
\end{theorem}

\subsection{Seifert bundles with boundary}

\begin{lemma}\label{sbwithbdylemma}
Suppose $M$ is a Seifert bundle with base space $S$.
Let $t\in\pi_1(M)$ be given by a regular fibre in $M$.
Assume that $S$ has at least one boundary component.
Moreover, assume that $S$ is not any of the following:
\begin{itemize}
\item an annulus with no marked points and no mirrors;
\item a disc with one marked point and no mirrors; 
\item a disc with one arc mirror and no marked points;
\item an annulus with one boundary component a simple closed curve mirror and no marked points;
\item a Mobius band with no marked points and no mirrors.
\end{itemize}
If $t\in Z_M$ then $Z_M=\langle t\rangle$, whereas if $t\notin Z_M$ then $Z_M\cong 1$. Moreover, $t\in Z_M$ if and only if the fibres can be coherently oriented.
\end{lemma}
\begin{proof}
Consider the presentation for $\pi_1(M)$ given by Proposition \ref{sbpi1prop}.
Recall that $t$ generates a normal subgroup of $\pi_1(M)$. 
We will show that $Z(\pi_1(M)/\langle t\rangle)\cong 1$, which implies that $Z_M\leq\langle t\rangle$. 

First assume that $S$ is orientable. 
Then
\begin{align*}
\pi_1(M)/\langle t\rangle=& \langle a_1,b_1,\ldots,a_x,b_x,c_1,\ldots,c_z,d_1,\ldots,d_y,e_1,\ldots e_v, f_1,g_1,\ldots,f_w,g_w\mid\\
& c_j^{r_j}=1, e_l^2=1, g_m^2=1, f_m g_m=g_m^{\eta_m} f_m,\\
& 1=a_1b_1a_1^{-1}b_1^{-1} a_2\cdots a_x^{-1}b_x^{-1}c_1\cdots c_z d_1\cdots d_y f_1\cdots f_w\rangle
\end{align*}
for some $r_j \in\mathbb{Z}$ with $|r_j|\geq 2$.
Since $y\geq 1$, in fact 
\begin{align*}
\pi_1(M)/\langle t\rangle=& \langle a_1,b_1,\ldots,a_x,b_x,c_1,\ldots,c_z,d_1,\ldots,d_{y-1},\\
& e_1,\ldots e_v, f_1,g_1,\ldots,f_w,g_w\mid\\
& c_j^{r_j}=1, e_l^2=1, g_m^2=1, f_m g_m=g_m^{\eta_m} f_m\rangle.
\end{align*}
This is a free product of $2x+y-1$ copies of $\mathbb{Z}$, $z+v$ finite cyclic groups, and $w$ copies of either $\mathbb{Z}\times\mathbb{Z}_2$ or $\mathbb{Z}\rtimes\mathbb{Z}_2$.
It follows that the centre of $\pi_1(M)/\langle t\rangle$ is trivial unless $2x+y-1+z+v+w\leq 1$. As $y\geq 1$, the possible exceptions are given by $(x,y,z,v,w)\in\{(0,1,0,0,0),(0,2,0,0,0),(0,1,1,0,0),(0,1,0,1,0),(0,1,0,0,1)\}$.
If $(x,y,z,v,w)=(0,1,0,0,0)$ then $M$ is a solid torus and $\pi_1(M)$ is generated by $t$.
The other cases are excluded by hypothesis.

In addition, there is a homomorphism \(\theta\colon\pi_1(M)\to\mathbb{R}\rtimes\mathbb{Z}_2\) given by
\begin{align*}
\theta(a_i)=(0,(1-\delta_i)/2),\quad &\theta(b_i)=(0,(1-\varepsilon_i)/2),\quad\theta(c_j)=(s_j/r_j,0),\\
\theta(e_l)=(1/2,0),\quad &\theta(f_m)=(0,(1-\eta_m)/2),\quad\theta(g_m)=(1/2,0),\\
\theta(t)=(1,0),\quad&\theta(d_k)=(0,(1-\zeta_k)/2) \text{ for }k<y,\\
\theta(d_y)=\theta(d_{y-1}^{-1}\cdots d_1^{-1}& c_z^{-1}\cdots c_1^{-1}b_xa_x\cdots a_2^{-1}b_1a_1b_1^{-1}a_1^{-1}t^rf_w^{-1}\cdots f_1^{-1}).
\end{align*} 
Note that \(\theta(d_y t d_y^{-1}t^{-\zeta_y})=(0,0)\) because the condition that \(\prod \zeta_k\prod\eta_m=1\) ensures that \(\theta(d_y)=(\alpha,(1-\zeta_y)/2)\) for some \(\alpha\in\mathbb{R}\).
From this we see that \(t\) has infinite order in \(\pi_1(M)\).

Now assume instead that $S$ is non-orientable. 
Then
\begin{align*}
\pi_1(M)/\langle t\rangle=& \langle a_1,\ldots,a_x,c_1,\ldots,c_z,d_1,\ldots,d_y,e_1,\ldots e_v, f_1,g_1\ldots,f_w,g_w\mid\\
& c_j^{r_j}=1,e_l^2=1,g_m^2=1, f_m g_m=g_m^{\eta_m} f_m,\\ 
& 1=a_1^2 a_2^2\cdots a_x^2 c_1\cdots c_z d_1\cdots d_y f_1\cdots f_w\rangle.
\end{align*}
Again, since $y\geq 1$, 
\begin{align*}
\pi_1(M)/\langle t\rangle=& \langle a_1,\ldots,a_x,c_1,\ldots,c_z,d_1,\ldots,d_{y-1},e_1,\ldots e_v, f_1,g_1\ldots,f_w,g_w\mid\\
& c_j^{r_j}=1,e_l^2=1,g_m^2=1, f_m g_m=g_m^{\eta_m} f_m\rangle.
\end{align*}
This is a free product of $x+y-1$ copies of $\mathbb{Z}$, $z+v$ finite cyclic groups, and $w$ copies of either $\mathbb{Z}\times\mathbb{Z}_2$ or $\mathbb{Z}\rtimes\mathbb{Z}_2$, and so has trivial centre unless $x+y-1+z+v+w\leq 1$.
Note that $x\geq 1$, so the only exceptional case is $(x,y,z,v,w)=(1,1,0,0,0)$, which is excluded by hypothesis.

As before we can see that \(t\) has infinite order using the homomorphism \(\theta\colon\pi_1(M)\to\mathbb{R}\rtimes\mathbb{Z}_2\) given by
\begin{align*}
\theta(a_i)=(0,(1-\delta_i)/2),\quad&\theta(c_j)=(s_j/r_j,0),\\
\theta(e_l)=(1/2,0),\quad &\theta(f_m)=(0,(1-\eta_m)/2),\quad\theta(g_m)=(1/2,0),\\
\theta(t)=(1,0),\quad&\theta(d_k)=(0,(1-\zeta_k)/2) \text{ for }k<y,\\
\theta(d_y)=\theta(d_{y-1}^{-1}\cdots d_1^{-1}& c_z^{-1}\cdots c_1^{-1}a_x^{-2}\cdots a_1^{-2}t^rf_w^{-1}\cdots f_1^{-1}).
\end{align*} 

By Remark \ref{orientremark}, in both cases we find that either \(Z_M=\langle t\rangle\) or \(Z_M\cong 1\).
\end{proof}

\begin{remark}
We could alternatively verify that \(t\) has infinite order using Theorem \ref{finiteorderthm}. Since $M$ has a boundary component that is either a torus or a Klein bottle, Theorem \ref{finitepi1thm} shows that $\pi_1(M)$ is infinite, and Theorem \ref{rp2thm} shows that $M$ is prime and contains no two-sided $\pp^2$.
\end{remark}

\begin{lemma}\label{isZhasbdylemma}
Suppose that $M$ is a Seifert bundle with base space $S$. Suppose that one of the following holds:
\begin{itemize}
\item $S$ is a disc with one marked point and no mirrors;
\item $S$ is a disc with no marked points and one arc mirror;
\item $S$ is a Mobius band with no marked points or mirrors, and $M$ is orientable;
\item $S$ is an annulus with no marked points or mirrors, and $M$ is non-orientable;
\item $S$ is an annulus with no marked points and one boundary component a simple closed curve mirror, and $M$ has boundary a Klein bottle.
\end{itemize}
Then $Z_M\cap \pi_1^+(M)\cong\mathbb{Z}$, generated by a regular fibre of some Seifert bundle structure in which the fibres can be coherently oriented.
\end{lemma}
\begin{proof}
First suppose that $S$ is a disc with one marked point.
Then $M$ is a solid torus (and hence also Seifert fibred over a disc with no marked points) and $Z_M=\pi_1(M)\cong \mathbb{Z}$.

If $S$ is a disc with no marked points and one arc mirror,
then $M$ is a (fibred) solid Klein bottle, 
$Z_M=\pi_1(M)= \langle e_1, t\mid e_1^2=t\rangle$ and $Z_M\cap\pi_1^+(M)=\pi_1^+(M)\cong\mathbb{Z}$, generated by $t$.

Now suppose that $S$ is a Mobius band with no marked points or mirrors, and $M$ is orientable (equivalently, the core curve of the Mobius band reverses the fibre). It is noted in \cite{MR0426001} Section 5.4 that in this case $M$ is also Seifert fibred over a disc with two marked points. We may therefore apply Lemma \ref{sbwithbdylemma}.

Suppose next that $S$ is an annulus with no marked points or mirrors, and $M$ is non-orientable. Then $M$ is $\kb\times I$ with, 
in the notation of Proposition \ref{pi1prop},
\begin{align*}
\pi_1(M)
=& \langle d_1,t\mid d_1 t=t^{-1} d_1\rangle.
\end{align*}
Since \(d_1^2\) is central and
\begin{align*}
\pi_1(M)/\langle d_1^2\rangle=& \langle d_1,t\mid d_1 t=t^{-1} d_1, d_1^2=1\rangle,
\end{align*}
which has no centre,
it follows that $Z_M\cong\mathbb{Z}$, generated by $d_1^2$.

This manifold can also be given a Seifert bundle structure over a disc with two arc mirrors. The generator $d_1^2$ of $Z_M$ becomes the regular fibre in this second Seifert bundle structure. Note that this then agrees with our findings in Lemma \ref{sbwithbdylemma}. The annuli that become the exceptional fibres are of the form \(\rho\times I\) in \(\kb\times I\), where \(\rho\) is a core curve of a cross-cap in \(\mathbb{K}\).

Finally, suppose that $S$ is an annulus with no marked points and one boundary component a simple closed curve mirror, and that the boundary of $M$ is a Klein bottle. Then
\begin{align*}
\pi_1(M)=& \langle d_1, f_1,g_1, t\mid d_1 t=t^{-1} d_1, g_1^2=t, f_1 g_1=g_1^{-1} f_1, t^r= d_1 f_1\rangle\\
=& \langle f_1,g_1, t\mid f_1 t=t^{-1} f_1, g_1^2=t, f_1 g_1=g_1^{-1} f_1\rangle\\
=& \langle f_1,g_1\mid f_1 g_1=g_1^{-1} f_1\rangle.
\end{align*}
Therefore $Z_M\cong\mathbb{Z}$, generated by $f_1^2$.
Again, we can find an alternative Seifert bundle structure on $M$, this time over a disc with one arc mirror and one marked point. The regular fibre in this new structure corresponds to $f_1^2$. As before, this agrees with Lemma \ref{sbwithbdylemma}.
\end{proof}

\begin{lemma}\label{torusxIlemma}
Suppose that $M$ is Seifert fibred with base space an annulus with no marked points or mirrors, and that $M$ is orientable. Then $M=\ts\times I$ and $Z_M=\pi_1(M)\cong\mathbb{Z}^2$. 
\end{lemma}

\begin{lemma}\label{MbandxS1lemma}
Suppose that one of the following holds:
\begin{itemize}
\item $M$ is Seifert fibred with base space a Mobius band with no marked points, and $M$ is non-orientable;
\item $M$ is a Seifert bundle over an annulus with no marked points and one boundary component a simple closed curve mirror, and the boundary of $M$ is a torus.
\end{itemize}
Then $Z_M\cap\pi_1^+(M)\cong\mathbb{Z}^2$. Moreover, there are two elements of $Z_M\cap\pi_1^+(M)$ that together generate this group, such that each is given by the regular fibre of a Seifert bundle structure on $M$.
\end{lemma}
\begin{proof}
In the first case, 
\begin{align*}
\pi_1(M)=& \langle a_1,d_1,t\mid a_1 t=t a_1, d_1 t=t d_1, t^r=a_1^2 d_1\rangle\\
=& \langle a_1,t\mid a_1 t=t a_1\rangle
\end{align*}
and $Z_M\cap\pi_1^+(M)$ is generated by $t$ and $a_1^2$.

In the second case, 
\begin{align*}
\pi_1(M)=& \langle d_1, f_1,g_1, t\mid d_1 t=t d_1, g_1^2=t, f_1 g_1=g_1 f_1, t^r= d_1 f_1\rangle\\
=& \langle f_1,g_1, t\mid f_1 t=t f_1, g_1^2=t, f_1 g_1=g_1 f_1\rangle\\
=& \langle f_1,g_1\mid f_1 g_1=g_1 f_1\rangle,
\end{align*}
and $Z_M\cap\pi_1^+(M)$ is generated by $f_1$ and $g_1^2=t$.

In fact, each of these manifolds is \(\mb\times\crcle\),  
and there is a homeomorphism between them that takes $g_1$ to $a_1$ and $f_1$ to $t$.
\end{proof}

\subsection{Closed Seifert bundles}

We now need to consider closed Seifert bundles. If $M$ is large, closed and Seifert fibred, Proposition \ref{largenormalprop} shows that $Z_M\subseteq \langle t\rangle$, where $t$ represents a regular fibre. We will return to the list of small Seifert fibred spaces after forming a related list of Seifert bundles that contain fibred solid Klein bottles.

\begin{lemma}\label{sbnobdylemma}
Suppose that $M$ is a closed Seifert bundle with base space $S$. Assume that $S$ has (orientable or non-orientable) genus $x$, $z$ marked points and $w$ simple closed curve mirrors, where $w\geq 1$. Let $t$ denote a regular fibre in $M$.
Assume further that none of the following holds:
\begin{itemize}
\item $S$ is a disc with mirrored boundary and at most one marked point;
\item $S$ is a disc with mirrored boundary and two marked points, and cutting $M$ along the torus of exceptional fibres gives the orientable $\crcle$--bundle over a Mobius band;
\item $S$ is an annulus with two mirrored boundary components and no marked points;
\item $S$ is a Mobius band with mirrored boundary and no marked points.
\end{itemize}
Then either $Z_M\cap\pi_1^+(M)=\langle t\rangle\cong \mathbb{Z}$ (if the fibres can be coherently oriented) or $Z_M\cong 1$ (else).
\end{lemma}
\begin{proof}
Let $T$ be the union of the exceptional fibres in $M$ that have solid Klein bottle neighbourhoods. Then $T$ is a (non-empty) union of properly embedded one-sided tori and Klein bottles. Let $M'$ be the manifold given by cutting $M$ along $T$ and then doubling the resulting manifold (that is, take two copies and glue the two boundaries together by the identity map). Then $M'$ is a double cover of $M$, and the division of $M$ into copies of $\crcle$ induces such a division on $M'$. 

We find that $M'$ is Seifert fibred with a base space $S'$ that is given by doubling $S$ along its boundary components (after forgetting the mirrors).
If $S$ is orientable then $S'$
has $2z$ marked points and genus $2x+w-1$. 
On the other hand, if $S$ is non-orientable then $S'$ has $2z$ marked points and $2x+2(w-1)$ cross-caps.
Moreover, a regular fibre in $M$ lifts to a regular fibre in $M'$.

This process does not affect whether the fibres can be coherently oriented. Thus the fibres of $M$ can be coherently oriented if and only if the fibres of $M'$ can be coherently oriented.

As $M'$ double covers $M$, we can see $\pi_1(M')$ as an index $2$ subgroup of $\pi_1(M)$. More precisely, it is the subgroup of loops in $M$ that cross $T$ an even number of times. Therefore, if $\rho\in Z_M$ then $\rho^2\in\pi_1(M')$ and so $\rho^2\in Z(\pi_1(M'))$.

First suppose that $M'$ is large. By combining Proposition \ref{largenormalprop} and Remark \ref{orientremark} we find that $t$ has infinite order in \(\pi_1(M')\), and either $Z(\pi_1(M'))\cong 1$ or $Z(\pi_1(M'))=\langle t\rangle$, according to whether there is a loop in $S'$ that reverses the fibre. 

If $Z(\pi_1(M'))\cong 1$ then every element of $Z_M$ has finite order. 
It is not possible that $M$ is either an $\sphere$--bundle over $\crcle$ or $\mathbb{P}^2\times\crcle$, as these have fundamental groups that are abelian and contain elements of infinite order. Since $M$ is non-orientable, from Theorem \ref{rp2thm} we conclude that $M$ is irreducible and contains no two-sided $\mathbb{P}^2$.
Theorem \ref{finiteorderthm} therefore shows that $\pi_1(M)$ contains no non-trivial elements of finite order. Thus $Z_M\cong 1$ in this case.

Suppose instead that $Z(\pi_1(M'))=\langle t\rangle$. 
This will be the case exactly when every loop in $S'$ preserves the fibre in $M'$, and so the fibres can be coherently oriented. Translating this back to $M$, we find that every loop in $S$, including each of the (mirrored) boundary components, preserves the fibre in $M$. In particular, from this we know that $\langle t\rangle\leq Z_M$ (again, see Remark \ref{orientremark}).

Let $M''$ be the orientation double cover of $M$ (that is, the cover corresponding to $\pi_1^+(M)$). Then $M''$ is Seifert fibred, and a regular fibre in $M$ lifts to a regular fibre in $M''$. 
Each fibred solid torus in \(M\) lifts to two solid tori in \(M''\), while each fibred solid Klein bottle lifts to one solid torus.
Denote the base space of $M''$ by $S''$. Consider an orientation-reversing loop $\rho$ in $S$. Because $\rho$ preserves the fibre, any corresponding loop in $M$ is orientation-reversing in $M$. This shows that $\rho$ does not lift to a loop in $S''$. We therefore see that $S''$ is orientable. Note also that $S''$ is closed. We additionally know the Euler characteristic of $S''$: if $S$ is orientable then $\chi(S'')=2(-2x+2-w)$, so $S''$ has genus $2x-1+w$, and if $S$ is non-orientable then $\chi(S'')=2(-x+2-w)$, so $S''$ has genus $x-1+w$.

In most cases it follows that $M''$ is large, and so $Z(\pi_1(M''))\leq\langle t\rangle$. 
The possible exceptions occur when $S''$ has genus at most $1$, or equivalently when either $S$ is orientable with $(x,w)\in\{(0,1),(0,2)\}$ or $S$ is non-orientable with $(x,w)=(1,1)$. However, if $S$ is orientable then $M''$ is the same cover as $M'$, and we are currently working under the assumption that $M'$ is large. On the other hand, if $S$ is non-orientable with $x=w=1$ then $S''$ has genus $1$ and $S'$ is formed of two cross-caps. The assumption that $M'$ is large then means that $M'$ contains at least one exceptional fibre. It follows that $M''$ also contains at least one exceptional fibre, and so is large. Hence in each of the cases we are currently considering we find that $M''$ is large. Since $\pi_1(M'')=\pi_1^+(M)$, we conclude that $Z_M\cap\pi_1^+(M)\leq Z(\pi_1(M''))=\langle t\rangle$.

Now suppose that $M'$ is small. We have the following possibilities.
\begin{itemize}
\item $S'$ is a sphere with either zero or two marked points. For this to occur, $z\leq 1$ and $2x+w-1=0$. By assumption $w\geq 1$, so necessarily $w=1$ and $x=0$. Thus $S$ is a disc with mirrored boundary and at most one marked point. We have excluded this case by hypothesis. 

\item $S'$ is a sphere with four marked points. In this case, $z=2$, $w=1$ and $x=0$, so $S$ is a Mobius band with two marked points and mirrored boundary.
Note that, by construction, $M$ cut along the exceptional torus is one component of $M'$ cut along a torus of regular fibres with two exceptional fibres on each side. Although there are different tori in $M'$ that meet this description, the result of cutting along the torus is always the same up to homeomorphism preserving the Seifert fibred structure. Each such piece is homeomorphic to the orientable $\crcle$--bundle over a Mobius band (see \cite{MR0426001} Section 5.4). Our hypotheses exclude this case.

\item $S'$ is a torus with no marked points. Then $z=0$ and $2x+w-1=1$, which implies that $x=0$ and $w=2$. That is, $S$ is an annulus with no marked points, with mirrors on both boundary components. Again, we have excluded this case by hypothesis. 

\item $S'$ is a Klein bottle with no marked points. Thus $z=0$ and $2x+2w-2=2$. Since $x\geq 1$ and $w\geq 1$, we have $x=w=1$, and $S$ is a Mobius band with no marked points and the boundary a mirror. Once again, this case is excluded. \qedhere
\end{itemize}
\end{proof}

\begin{lemma}\label{spheretwopointslemma}
Suppose that $M$ is Seifert fibred over a sphere with at most two marked points. Then either $Z_M\cong 1$ or $Z_M$ is cyclic (finite or infinite according to whether $\pi_1(M)$ is), generated by the regular fibre of some Seifert fibration of $M$.
\end{lemma}
\begin{proof}
This is discussed in \cite{MR0426001} Section 5.4 (which is based on \cite{zbMATH03281474}). There are three possibilities: $M$ is either $\Sphere$, $\sphere\times\crcle$, or a lens space. In each case $Z_M=\pi_1(M)$. 
\end{proof}

\begin{lemma}\label{smalllemma1}
Suppose that $M$ is a small Seifert fibred space, with a base space $S$ such that one of the following holds:
\begin{itemize}
\item $S$ is a sphere with three marked points;
\item $S$ is $\mathbb{P}^2$ with at most one marked point, and $M$ is orientable.
\end{itemize}
Then either $M=\mathbb{P}^3\#\mathbb{P}^3$, in which case $Z_M\cong 1$, or else $Z_M$ is a cyclic group, generated by the regular fibre of some Seifert fibration of $M$, the fibres of which can be coherently oriented.
\end{lemma}
\begin{proof}
Again, this is discussed in \cite{MR0426001} Section 5.4. 

In the case that $S$ is $\mathbb{P}^2$, it may be that $M=\mathbb{P}^3\#\mathbb{P}^3$, to which we may apply Corollary \ref{nonprimecor}. Otherwise, $M$ has a second Seifert fibration, over a sphere with three marked points. 

If $S$ is a sphere with three marked points then the fibres of the fibration can be coherently oriented, so $\langle t\rangle\leq Z_M$. On the other hand, \cite{MR0426001} tells us that $Z_M\leq \langle t\rangle$ (if $\pi_1(M)$ is infinite then \cite{MR0426001} Section 5.3 Lemma 1 applies).
\end{proof}

\begin{lemma}\label{smalllemma2}
Suppose that $M$ is Seifert fibred with base space $S$, and one of the following holds:
\begin{itemize}
\item $S$ is $\mathbb{P}^2$ with at most one marked point, and $M$ is non-orientable;
\item $S$ is a disc with at most one marked point and mirrored boundary.
\end{itemize}
Then $Z_M\cap\pi_1^+(M)\cong\mathbb{Z}$, generated by the regular fibre of some Seifert fibration of $M$ in which the fibres can be coherently oriented.
\end{lemma} 
\begin{proof}
First suppose that $S$ is a disc with at most one marked point and mirrored boundary.
If there are no marked points then
\begin{align*}
\pi_1(M)=& \langle  f_1,g_1,t\mid g_1^2=t, f_1 g_1=g_1 f_1, t^r= f_1\rangle\\
=& \langle  g_1,t\mid g_1^2=t\rangle.
\end{align*}
Thus $Z_M=\pi_1(M)$, generated by $g_1$ (which is orientation-reversing), and $\pi_1^+(M)\cap Z_M\cong\mathbb{Z}$, generated by $t$.

If instead $M$ has one exceptional fibre, we can find an alternative Seifert bundle structure. This can be pictured as follows.
Cutting open \(M\) along the torus of exceptional fibres gives a solid torus. Fix a meridian-longitude pair \((\mu,\lambda)\), and fibre the solid torus by fibres parallel to \(\lambda\).  As discussed in Section \ref{bundlessubsection}, we may assume that the re-gluing map is given by a translation halfway along one of the slopes \((0,1)\), \((1,0)\)  or \((1,1)\) relative to \(\mu\) and \(\lambda\).

If the slope is \((0,1)\) then we have expressed \(M\) as a Seifert bundle over a disc with no marked points and with mirrored boundary.
On the other hand, if the slope is either \((1,0)\) or \((1,1)\) then the disc of the base space glues up to form a cross-cap, so \(M\) is Seifert fibred over \(\mathbb{P}^2\) with no marked points where the core curve of \(\mathbb{P}^2\) preserves the fibre.

Suppose now that $S$ is $\mathbb{P}^2$ with at most one marked point, and $M$ is non-orientable.
Here there are in fact only two different manifolds that arise, as noted in \cite{MR0426001} Section 5.4. The first is $\mathbb{P}^2\times\crcle$, for which $\pi_1(M)\cong\mathbb{Z}\times\mathbb{Z}_2$ and $Z_M\cap\pi_1^+(M)\cong\mathbb{Z}$.
The second is the non-orientable $\sphere$--bundle over $\crcle$, for which $\pi_1(M)\cong\mathbb{Z}$ and $Z_M\cap\pi_1^+(M)\cong\mathbb{Z}$. Each has a Seifert fibration over $\mathbb{P}^2$ with no marked points, such that the regular fibre generates $Z_M\cap\pi_1^+(M)$.
We can see this structure in the presentation for $\pi_1(M)$ as follows.
Since there are no marked points,
\begin{align*}
\pi_1(M)=& \langle a_1,t\mid a_1 t=t a_1, t^r=a_1^2\rangle.
\end{align*}
If $r$ is even this is $\mathbb{Z}\times\mathbb{Z}^2$,
generated by $t$ and $a_1^{-1}t^{\frac{r}{2}}$ (see Lemma \ref{powertwolemma}), and \(Z_M\cap\pi_1^+(M)\) is generated by \(t\).
On the other hand, if $r$ is odd this is infinite-cyclic generated by $a^m t^n$ where $rm+2n=1$ (see Lemma \ref{cyclicgrouplemma}). Note that \(a^m t^n\) is orientation-reversing and \(Z_M\cap\pi_1^+(M)\) is generated by \((a^m t^n)^2=t\).
\end{proof}

\begin{lemma}\label{toruspreservelemma}
Suppose that $M$ is Seifert fibred with base space $S$ that is a torus with no marked points, and each curve on $S$ preserves the fibre. Then either $M$ is the three-torus, in which case $Z_M=\pi_1(M)=\mathbb{Z}^3$, or else $Z_M\cong\mathbb{Z}$, generated by the regular fibre of the Seifert fibration.
\end{lemma}
\begin{proof}
Observe that
\begin{align*}
\pi_1(M)=& \langle a_1,b_1,t\mid a_1 t=t a_1,b_1 t=t b_1, t^r=a_1b_1a_1^{-1}b_1^{-1}\rangle.
\end{align*}
If $r=0$ then $M$ is the three-torus, and $\pi_1(M)=\mathbb{Z}^3$. Note that the symmetry of this space allows us to give it a Seifert fibred structure in different ways.
On the other hand, if $r\neq 0$ then Lemma \ref{grouplemma2} shows that $Z_M\cong\mathbb{Z}$, generated by $t$.
\end{proof}

\begin{lemma}\label{zxzlemma}
Suppose $M$ is a Seifert bundle with base space $S$, and one of the following holds:
\begin{itemize}
\item $S$ is a torus with no marked points, and at least one curve on $S$ reverses the fibre;
\item $S$ is a Klein bottle with no marked points, and every curve on $S$ preserves the fibre;
\item $S$ is an annulus with no marked points and two simple closed curve mirrors, and the core curve of the annulus preserves the fibre;
\item $S$ is a Mobius band with no marked points and mirrored boundary, and the core curve preserves the fibre.
\end{itemize}
Then $Z_M\cong\mathbb{Z}^2$, generated by elements each of which is the regular fibre of a Seifert bundle structure for which the fibres can be coherently oriented.
\end{lemma}
\begin{proof}
Suppose first that $M$ is Seifert fibred over a torus, and at least one loop on $S$ reverses the fibre. By suitable choices of generators,
\begin{align*}
\pi_1(M)=& \langle a_1,b_1,t\mid a_1 t=t^{-1} a_1,b_1 t=t^{-1} b_1, t^r=a_1b_1a_1^{-1}b_1^{-1}\rangle.
\end{align*}
Lemma \ref{grouplemma3} tells us $Z_M\cong\mathbb{Z}^2$, generated by $a_1^2$ together with (if \(r\) is odd) $b_1^2$ or (if $r$ is even) $a_1b_1t^{-r/2}$.

Now suppose that $S$ is a Klein bottle with no marked points, and every curve on $S$ preserves the fibre. Then 
\begin{align*}
\pi_1(M)=& \langle a_1,a_2,t\mid a_1 t=t a_1, a_2t=ta_2, t^r=a_1^2 a_2^2\rangle.
\end{align*}
By Lemma \ref{grouplemma7}, \(Z(G)\cong\mathbb{Z}^2\), generated by \(a_1^2\) and \(t\), or alternatively by \(a_2^2\) and \(t\).
Note that the fibres can be coherently oriented.

By \cite{MR0426001} Section 5.4, these two cases give the same spaces, which are Klein bottle bundles over $\crcle$.

Thirdly, suppose that $S$ is an annulus with two simple closed curve mirrors, and the core curve of the annulus preserves the fibre. Then
\begin{align*}
\pi_1(M)=& \langle f_1,g_1,f_2,g_2, t\mid g_1^2=t, g_2^2=t, f_1 g_1=g_1 f_1, f_2g_2=g_2f_2, t^r=f_1 f_2\rangle\\
=& \langle f_1,g_1,g_2\mid g_1^2=g_2^2, f_1 g_1=g_1 f_1, f_1 g_2=g_2 f_1\rangle.
\end{align*}
Thus $f_1,g_1^2\in Z_M$ and 
\begin{align*}
\pi_1(M)/\langle f_1,g_1^2\rangle=& \langle g_1,g_2\mid g_1^2=g_2^2=1\rangle.
\end{align*}
This has no centre, so $Z_M$ is generated by $f_1$ and $g_1^2=t$. 
Again, the fibres can be coherently oriented, and \(f_1\), \(g_1\) and \(g_2\) all have infinite order.

Fourthly, suppose that \(S\) is a Mobius band with mirrored boundary and no marked points, such that the core curve of the Mobius band preserves the fibre.
Then 
\begin{align*}
\pi_1(M)=& \langle a_1, f_1,g_1,t\mid a_1 t=t a_1,g_1^2=t, f_1 g_1=g_1 f_1, t^r=a_1^2  f_1\rangle\\
=& \langle a_1, g_1,t\mid a_1 t=t a_1,g_1^2=t, a_1^{-2} g_1=g_1 a_1^{-2}\rangle\\
=& \langle a_1, g_1\mid a_1 g_1^2=g_1^{2} a_1, a_1^{2} g_1=g_1 a_1^{2}\rangle,
\end{align*}
which maps onto \(\mathbb{Z}^2\).
Both \(a_1^2\) and \(g_1^2\) are central and
\begin{align*}
\pi_1(M)/\langle a_1^2,g_1^2\rangle
=& \langle a_1, g_1\mid a_1^2=g_1^2=1\rangle,
\end{align*}
which has no centre. Therefore \(Z_M\) is generated by \(a_1^2\) and \(g_1^2=t\). Moreover, the fibres can be coherently oriented.

The third and fourth cases are the same two manifolds as occur in both the first case and the second case. This can be seen as follows.

Suppose again that \(M\) is Seifert fibred over a Klein bottle. Cut \(M\) along the torus of fibres over the core curve of one cross-cap. Call the resulting manifold \(M'\). Then \(M'\) inherits a Seifert fibred structure over a Mobius band where the core curve of the Mobius band preserves the fibre. As in Lemma \ref{MbandxS1lemma}, \(M'\) can also be expressed as a Seifert bundle over an annulus with one mirrored boundary component where the core curve of the annulus preserves the fibre.

Choose a meridian-longitude pair \((\mu,\lambda)\) for the boundary torus such that the new fibres have slope \((0,1)\). 
The gluing map from \(M'\) to \(M\) pairs up points on the torus \(\partial M'\). 
We can use this map to define a second slope on the torus. Take a path \(\sigma\) that connects two points that are identified by the gluing map. Combining \(\sigma\) with its image gives a closed curve. As discussed in Section \ref{bundlessubsection}, we can see the resulting slope as equivalent to one of \((0,1)\), \((1,0)\) or \((1,1)\).

If the slope of the gluing map is \((0,1)\)
then regluing forms a mirror. That is, \(M\) is a Seifert bundle over an annulus with two mirrored boundary components.
On the other hand, if the slope is either \((1,0)\) or \((1,1)\), regluing instead forms a cross-cap in the base space. That is, \(M\) has a Seifert bundle structure over a Mobius band with mirrored boundary.

It is also interesting to note that there is a symmetry in the case of a Mobius band with a mirror corresponding to the two ways of cutting open \(M\) to give \(M'\). This gives three Seifert bundle structures on the same manifold, the regular fibres of any two of which generate \(Z_M\).
\end{proof}

We can also understand the manifolds in Lemma \ref{zxzlemma} more concretely as follows.

\begin{lemma}\label{bundleovertoruslemma}
Suppose \(M\) is a Seifert bundle with base space \(S\).

If \(S\) is an annulus with no marked points and two simple closed curve mirrors, and the core curve of the annulus preserves the fibre, then \(M\) is \(\mathbb{K}\times\crcle\).

If \(S\) is a Mobius band with no marked points and mirrored boundary, and the core curve preserves the fibre, then \(M\) is the mapping torus of the involution of \(\mathbb{K}=\mathbb{M}\cup\mathbb{M}\) that interchanges the two copies of \(\mathbb{M}\).
\end{lemma}
\begin{proof}
First suppose \(S\) is an annulus. 
Let \(M'\) be the manifold given by cutting \(M\) along the exceptional fibres. Then \(M'\) is \((\crcle\times I)\times\crcle\), where the fibres of the Seifert fibration are of the form \(\{q\}\times\crcle\). Let \(T'\) be the annulus \(\{p'\}\times I\times\crcle\) for a fixed point \(p'\).
Then we can view \(M'\) as \(\crcle\times T'\).
In regluing the two boundary tori of \(M'\) to give \(M\), the effect on \(T'\) is to glue up each of the two boundary components by an order \(2\) rotation to give \(\mathbb{K}\). As the same is true for each parallel copy of \(T'\) in \(\crcle\times S'\), we see that \(M=\crcle\times\mathbb{K}\).

Suppose instead that \(S\) is a Mobius band. Again, let \(M'\) be the manifold that results from cutting along the exceptional fibres. Choose a non-trivial properly embedded arc on the Mobius band, and cut \(M'\) along the annulus of fibres over the arc. Call the resulting manifold \(M''\). Then \(M''\) is Seifert fibred over a disc, expressed as \(M''=(I\times I)\times\crcle\) where the fibres are of the form \(\{q\}\times \crcle\). 
Denote by \(S''\) the square \(I\times I\times\{p'\}\) and by \(T''\) the annulus \(I\times\{p''\}\times\crcle\) for fixed points \(p'\) and \(p''\).
To recreate \(M'\) from \(M''\), we need to glue together the two annuli \(I\times\partial I\times\crcle\) such that \(S''\) glues up to form a two-sided Mobius band \(S'\). That is, \(M'\) is the mapping torus of the involution of an annulus \(T'\) given by reflection in the core curve of the annulus, where \(T'\) is the copy of \(T''\) in \(M'\).
In regluing the two boundary tori of \(M'\) to give \(M\), the effect on \(T'\) is to glue up each of the two boundary components by an order \(2\) rotation to give \(\mathbb{K}\). As the same is true for each parallel copy of \(T'\) in the mapping torus \(M'\), we see that \(M\) is the mapping torus of the involution of \(\mathbb{K}\) given by reflection in the curve that separates the two Mobius bands.
\end{proof}

\begin{lemma}\label{smalllemma3}
Suppose that $M$ is a Seifert bundle with base space $S$, and one of the following holds:
\begin{itemize}
\item $S$ is a Klein bottle with no marked points, and the core curves of both cross-caps reverse the fibre; 
\item $M$ is a small Seifert fibred space and $S$ is a sphere with four marked points.
\end{itemize}
Then either \(Z_M\cong 1\) or \(Z_M\cong\mathbb{Z}\), generated by the regular fibre of a Seifert fibration in which the fibres can be coherently oriented. 
\end{lemma}
\begin{proof}
Suppose, first, that $S$ is a Klein bottle with no marked points, and the core curves of both cross-caps of $S$ reverse the fibre. Then
\begin{align*}
\pi_1(M)=& \langle a_1,a_2,t\mid a_1 t=t^{-1} a_1, a_2 t=t^{-1} a_2, t^r=a_1^2 a_2^2\rangle
\end{align*}

If \(r\neq 0\) then, by Lemma \ref{grouplemma4}, \(Z_M\cong 1\).

Assume instead that \(r=0\), so
\begin{align*}
\pi_1(M)=& \langle a_1,a_2,t\mid a_1 t=t^{-1} a_1, a_2 t=t^{-1} a_2, 1=a_1^2 a_2^2\rangle.
\end{align*}
Then \(a_1^2\) is central and 
\begin{align*}
\pi_1(M)/\langle a_1^2\rangle=& \langle a_1,a_2,t\mid a_1 t=t^{-1} a_1, a_2 t=t^{-1} a_2, 1=a_1^2= a_2^2\rangle.
\end{align*}
Lemma \ref{grouplemma5} shows that this has no centre. Thus \(Z_M\) is generated by \(a_1^2\).

By \cite{MR0426001} Section 5.4, this is the same manifold as the small Seifert fibred space with base space a sphere and four marked points.
With this fibration, the fibres can be coherently oriented.
\end{proof}

\begin{lemma}\label{smalllemma4}
Suppose that $M$ is a Seifert bundle with base space $S$, and one of the following holds:
\begin{itemize}
\item $S$ is a disc with two marked points and mirrored boundary, and cutting along the torus of exceptional fibres gives the orientable \(\crcle\)--bundle over a Mobius band;
\item $S$ is a Klein bottle with no marked points, where the core curve of one cross-cap 
preserves the fibre and the other reverses it;
\item $S$ is an annulus with mirrors on both boundary components, and the core curve of the annulus reverses the fibre.
\end{itemize}
Then $Z_M\cong\mathbb{Z}$, generated by the regular fibre of a Seifert bundle structure in which the fibres can be coherently oriented.
\end{lemma}
\begin{proof}
Suppose that $S$ is a Klein bottle with no marked points, and the core curve of one cross-cap preserves the fibre while the other reverses it. Then
\begin{align*}
\pi_1(M)=& \langle a_1,a_2,t\mid a_1 t=ta_1, a_2t=t^{-1} a_2, t^r=a_1^2 a_2^2\rangle.
\end{align*}
Now, $a_2^2$ is central and 
\begin{align*}
\pi_1(M)/\langle a_2^2\rangle=& \langle a_1,a_2,t\mid a_1 t=ta_1, a_2t=t^{-1} a_2, t^r=a_1^2, a_2^2=1\rangle,
\end{align*}
which has no centre, as shown by Lemma \ref{grouplemma6}, so \(Z_M\) is cyclic. In addition,
\begin{align*}
\pi_1(M)/\langle\!\langle t, a_1a_2\rangle\!\rangle=& \langle a_2\mid \ \rangle,
\end{align*}
where \(\langle\!\langle t, a_1a_2\rangle\!\rangle\) denotes the normal subgroup generated by \(t\) and \(a_1a_2\), so \(a_2^2\) has infinite order.

Cut \(M\) open along the torus of fibres over the core curve of a cross-cap in \(S\) that preserves the fibre. Call the new manifold \(M'\). Then \(M'\) inherits a Seifert fibred structure over a Mobius band where the core curve of the Mobius band reverses the fibre. That is, \(M'\) is the orientable \(\crcle\)--bundle over a Mobius band.
As in Lemma \ref{isZhasbdylemma}, \(M'\) can be refibred over a disc with two marked points.
The fibres of the new fibration define a slope on the boundary torus \(\partial M'\). 
As in the proof of Lemma \ref{zxzlemma}, we can ask whether this slope coincides with that of the gluing map.

If the two slopes do not agree then the boundary component of the base surface of the Seifert fibre structure closes up to form a cross-cap. That is, \(M\) is Seifert fibred over \(\mathbb{P}^2\) with two marked points, where the core curve of the cross-cap preserves the fibre. 
Here the fibres can be coherently oriented. In this case Proposition \ref{largenormalprop} applies, showing that \(Z_M\cong\mathbb{Z}\), generated by the regular fibre.

Suppose instead that the slope of the fibres aligns with the gluing map. 
Then regluing gives a mirror. That is, \(M\) is expressed as a Seifert bundle over a disc with mirrored boundary and two marked points; we are now in the first case in the statement of Lemma \ref{smalllemma4}. The fibres can be coherently oriented, and
\begin{align*}
\pi_1(M)= \langle c_1,c_2,f_1&,g_1, t'\mid c_1 t'=t' c_1,c_2 t'=t' c_2, c_1^2=t',c_2^2=t',  g_1^2=t', \\
&f_1 g_1=g_1 f_1, (t')^n=c_1 c_2 f_1\rangle\\
= \langle c_1,c_2,g_1&, t'\mid c_1^2=t',c_2^2=t',  g_1^2=t', g_1c_1c_2= c_1c_2g_1\rangle\\
= \langle c_1,c_2,g_1&\mid c_1^2=c_2^2= g_1^2, g_1c_1c_2= c_1c_2g_1\rangle.
\end{align*}
There is an isomorphism \(\theta\) to the group presentation above, with \(r=0\), given by
\(\theta(c_1)=a_2\), \(\theta(c_2)=a_2t\) and \(\theta(g_1)=a_1^{-1}\). Thus with this presentation \(Z_M\) is generated by \(\theta^{-1}(a_2^2)=c_1^2=t'\).

Now suppose instead that \(S\) is an annulus with mirrors on both boundary components, where the core curve of the annulus reverses the fibre. Then
\begin{align*}
\pi_1(M)=& \langle f_1,g_1,f_2,g_2, t\mid g_1^2=t, g_2^2=t, f_1 g_1=g_1^{-1} f_1, f_2g_2=g_2^{-1}f_2, t^r=f_1 f_2\rangle\\
=& \langle f_1,g_1,g_2\mid g_1^2=g_2^2, f_1 g_1=g_1^{-1} f_1, f_1 g_2=g_2^{-1} f_1\rangle.
\end{align*}
By Lemma \ref{equalsquareslemma}, the centre of this group is generated by \(f_1^2\).

This manifold is also homeomorphic to the second case we have just considered. That is, it can be refibred as a Seifert fibred space over a Klein bottle where the core curve of one cross-cap preserves the fibre and another reverses it, and also as a Seifert bundle over a disc with two marked points and mirrored boundary. This second refibring can be pictured as follows. 

 Let $M'$ be a Seifert bundle with base space $S'$, where $S'$ is an annulus with one boundary component a simple closed curve mirror, such that $\partial M'$ is a Klein bottle (that is, the core curve of the annulus reverses the fibre). Then doubling $M'$ along its boundary gives $M$. In Lemma \ref{isZhasbdylemma} we noted that $M'$ can also be expressed as a Seifert bundle over a disc with one arc mirror and one marked point. 
Consider the boundary patterns created by these fibrings. In each case, there are two curves that are the boundary of the annulus of exceptional fibres. These are the two distinct non-separating, orientation-reversing simple closed curves on a Klein bottle. These cut the Klein bottle into an annulus, and the regular fibre fills this with circles (being the unique separating curve).
Therefore the two fibrings match up suitably under the re-gluing identification.
\end{proof}

\begin{lemma}\label{mbandclosedlemma}
Suppose that $M$ is a Seifert bundle with base space a Mobius band with mirrored boundary, such that the core curve of the Mobius band reverses the fibre.
Then $Z_M\cap\pi_1^+(M)\cong\mathbb{Z}$, generated by the regular fibre of a Seifert bundle structure on $M$.
\end{lemma}
\begin{proof}

The boundary curve of the Mobius band must preserve the fibre, so
\begin{align*}
\pi_1(M)=& \langle a_1, f_1,g_1,t\mid a_1 t=t^{-1} a_1,g_1^2=t, f_1 g_1=g_1 f_1, t^r=a_1^2  f_1\rangle\\
=& \langle a_1, g_1,t\mid a_1 t=t^{-1} a_1,g_1^2=t, a_1^{-2} g_1=g_1 a_1^{-2}\rangle\\
=& \langle a_1, g_1\mid a_1 g_1^2=g_1^{-2} a_1, a_1^{2} g_1=g_1 a_1^{2}\rangle.
\end{align*}
Now, $a_1^2$ is central and
\begin{align*}
\pi_1(M)/\langle a_1^2\rangle
=& \langle a_1, g_1\mid a_1 g_1^2=g_1^{-2} a_1, a_1^{2}=1\rangle,
\end{align*}
which has no centre by Lemma \ref{squarereverselemma}, so $Z_M$ is generated by $a_1^2$.

This space is also Seifert fibred over \(\mathbb{P}^2\) with two marked points, as can be pictured as follows. 
Cutting along the torus of exceptional fibres gives the orientable manifold Seifert fibred over a Mobius band with no marked points. As noted in Lemma \ref{isZhasbdylemma}, this can also be given a Seifert fibred structure over a disc with two marked points. Regluing to give \(M\) causes the boundary of the disc to close up into a cross-cap.

This structure can be observed in the fundamental group by noting that
\begin{align*}
\langle a_1,c_1,c_2,t&\mid a_1t=ta_1, c_1t=tc_1,c_2t=tc_2,c_1^2=t,c_2^2=t,t=a_1^2c_1c_2\rangle\\
&=\langle a_1,c_1,t\mid a_1t=ta_1,c_1^2=t,(c_1^{-1}a_1^{-2}t)^2=t\rangle\\
&=\langle a_1,c_1\mid a_1c_1^2=c_1^2a_1,a_1^{-2}c_1=c_1 a_1^2\rangle.
\end{align*}
With this presentation, the centre of the group is generated by $c_1^2=t$, corresponding to a regular fibre. This agrees with Proposition \ref{largenormalprop}, which applies in this case.
\end{proof}

\section{The main proof}\label{proofsection}

The following theorems are versions of the Seifert Fiber Space Theorem. Discussion of this, and references for the proofs, can be found in \cite{MR3178943}.

\begin{theorem}[\cite{MR1303675} Theorem A]\label{orientablesfsthm}
Let $M'$ be a compact, orientable, irreducible $3$--manifold such that $\pi_1(M')$ is infinite. Then $M'$ is Seifert fibred if and only if $\pi_1(M')$ contains a non-trivial cyclic normal subgroup.
\end{theorem}

\begin{theorem}[\cite{MR1303675} Theorem 1]\label{nonorientablesfsthm}
Let $M'$ be a compact, non-orientable, irreducible $3$--manifold. If $\pi_1(M')$ contains a non-trivial cyclic normal subgroup then either $M'$ is a Seifert bundle or $M'$ has a \(\mathbb{P}^2\) boundary component.
\end{theorem}

\begin{proposition}\label{primesbprop}
Let $M$ be a compact $3$--manifold with no boundary component that is either an $\sphere$ or a $\pp^2$. If $\pi_1(M)$ has a non-trivial centre then $M$ is prime and is a Seifert bundle.
\end{proposition}
\begin{proof}
Let $\rho$ be a non-trivial element of the centre $Z_M$ of $\pi_1(M)$. 
Then $\rho$ generates a (non-trivial) cyclic normal subgroup of $\pi_1(M)$.
Corollary \ref{nonprimecor} tells us that $M$ is prime.

To see that $M$ is a Seifert bundle, we must consider different cases.
Firstly, if $M$ is reducible then $M$ is either the product space $\sphere\times\crcle$ or the twisted bundle $\sphere\rtimes\crcle$ (see, for example, \cite{MR0415619} Lemma 3.13), both of which are Seifert fibred. 
Assume further, therefore, that $M$ is irreducible.
If $M$ is non-orientable then Theorem \ref{nonorientablesfsthm} applies, showing that $M$ is a Seifert bundle.
If $M$ is orientable and $\pi_1(M)$ is infinite then Theorem \ref{orientablesfsthm} gives that $M$ is Seifert fibred.
Finally, if $M$ is orientable and $\pi_1(M)$ is finite then we again find that $M$ is Seifert fibred, by Theorem \ref{finitepi1thm}.
\end{proof}

\maintheorem
\begin{proof}
By Proposition \ref{sequenceprop}, the exact sequence holds with $K=\ker(\Phi_M)$.
We need to consider the case that $K\ncong 1$.
It then follows, by Lemma \ref{centrelemma}, that $Z_M\ncong 1$, where \(Z_M\) denotes the centre \(Z(\pi_1(M))\) of \(\pi_1(M)\). 
By Proposition \ref{simplebdyprop}, no boundary component of $M$ is either $\sphere$ or $\mathbb{P}^2$. Accordingly, Proposition \ref{primesbprop} tells us that $M$ is a prime Seifert bundle.
Consider the presentation for $\pi_1(M)$ given by Proposition \ref{sbpi1prop}.
Recall that if the fibres of the Seifert bundle structure can be coherently oriented then there is an induced $\crcle$--action on $M$ demonstrating that the regular fibre is an element of $K$.

If $M$ has boundary (that is, if $y\geq 1$) then, apart from the five exceptions in Lemma \ref{sbwithbdylemma}, $Z_M$ is either trivial (a possibility we have already excluded) or infinite cyclic with generator $t$. Moreover, $Z_M\ncong 1$ exactly when the fibres can be coherently oriented, demonstrating that $t\in\ker(\Phi_M)$. Hence we find that $K=Z_M$.
Similarly, Lemma \ref{isZhasbdylemma} shows that in some (subcases) of these five exceptional cases $Z_M\cap\pi_1^+(M)\cong\mathbb{Z}$, generated by the regular fibre of some Seifert bundle structure in which the fibres can be coherently oriented.
There are three remaining possibilities. If $M$ is orientable and Seifert fibred over an annulus with no marked points (see Lemma \ref{torusxIlemma}) then $M$ is \(\ts^2\times I\).
On the other hand, as mentioned in Lemma \ref{MbandxS1lemma}, $M$ is $\mathbb{M}\times\crcle$ if $M$ is Seifert fibred with no exceptional fibres over a base space $S$, where either \(S\) is a Mobius band and $M$ is non-orientable, or else $S$ is an annulus and $\partial M$ is a torus.
This gives us the first two exceptions listed in Theorem \ref{maintheorem}.

We now assume that $M$ is closed (that is, that $y=0$, and therefore also $v=0$). If the given Seifert bundle structure is not a Seifert fibration (that is, if $w\geq 1$) then applying Lemma \ref{sbnobdylemma} leaves only four cases to consider.
The first of these is covered by Lemma \ref{smalllemma2}, the second by Lemma \ref{smalllemma4},
the third by Lemmas \ref{zxzlemma} and \ref{smalllemma4},
and the fourth by Lemmas \ref{zxzlemma} and \ref{mbandclosedlemma}. Lemma \ref{zxzlemma} together with Lemma \ref{bundleovertoruslemma} adds to the list of exceptional cases.

Finally, suppose that the given structure on $M$ is a Seifert fibration (that is, that $w=0$). If $M$ is large then Proposition \ref{largenormalprop} says that $Z_M\leq\langle t\rangle$ and $t$ has infinite order. Combining this with Remark \ref{orientremark}, we see that $Z_M=\langle t\rangle$ and the fibres can be coherently oriented, since $Z_M\ncong 1$.

The various cases when $M$ is small are addressed in Lemmas \ref{spheretwopointslemma}, \ref{smalllemma1}, \ref{smalllemma2}, \ref{toruspreservelemma}, \ref{zxzlemma}, \ref{smalllemma3} and \ref{smalllemma4}.
In each case, Theorem \ref{maintheorem} is satisfied. The final exceptional case in the statement is given by Lemma \ref{toruspreservelemma}, with Lemma \ref{zxzlemma} repeating our earlier findings.
\end{proof}

It is interesting to note the geometry of the list of exceptional cases in Theorem \ref{maintheorem}.
The closed manifolds have Euclidean geometry.
The interior of the manifold \(\ts^2\times I\) does not admit a finite volume geometric structure. On the other hand, as a manifold with boundary it has a finite volume Euclidean structure with geodesic boundary. Quotienting this by an isometry yields \(\mb\times\crcle\).


\bibliography{birmanreferences}

\begin{thebibliography}{10}

\bibitem{zbMATH06472948}
Matthias {Aschenbrenner}, Stefan {Friedl}, and Henry {Wilton}.
\newblock {\em {3-manifold groups.}}
\newblock Z\"urich: European Mathematical Society (EMS), 2015.

\bibitem{MR0243519}
Joan~S. Birman.
\newblock Mapping class groups and their relationship to braid groups.
\newblock {\em Comm. Pure Appl. Math.}, 22:213--238, 1969.

\bibitem{MR0375281}
Joan~S. Birman.
\newblock {\em Braids, links, and mapping class groups}.
\newblock Princeton University Press, Princeton, N.J., 1974.
\newblock Annals of Mathematics Studies, No. 82.

\bibitem{zbMATH03180034}
D.B.A. {Epstein}.
\newblock {Projective planes in 3-manifolds.}
\newblock {\em {Proc. Lond. Math. Soc. (3)}}, 11:469--484, 1961.

\bibitem{MR2850125}
Benson Farb and Dan Margalit.
\newblock {\em A primer on mapping class groups}, volume~49 of {\em Princeton
  Mathematical Series}.
\newblock Princeton University Press, Princeton, NJ, 2012.

\bibitem{MR1303675}
Wolfgang Heil and Wilbur Whitten.
\newblock The {S}eifert fiber space conjecture and torus theorem for
  nonorientable {$3$}-manifolds.
\newblock {\em Canad. Math. Bull.}, 37(4):482--489, 1994.

\bibitem{MR0415619}
John Hempel.
\newblock {\em {$3$}-{M}anifolds}.
\newblock Princeton University Press, Princeton, N. J., 1976.
\newblock Ann. of Math. Studies, No. 86.

\bibitem{MR565450}
William Jaco.
\newblock {\em Lectures on three-manifold topology}, volume~43 of {\em CBMS
  Regional Conference Series in Mathematics}.
\newblock American Mathematical Society, Providence, R.I., 1980.

\bibitem{zbMATH03281474}
P.~{Orlik} and F.~{Raymond}.
\newblock {On 3-manifolds with local SO(2) action.}
\newblock {\em {Q. J. Math., Oxf. II. Ser.}}, 20:143--160, 1969.

\bibitem{MR0426001}
Peter Orlik.
\newblock {\em Seifert manifolds}.
\newblock Lecture Notes in Mathematics, Vol. 291. Springer-Verlag, Berlin,
  1972.

\bibitem{MR3178943}
Jean-Philippe Pr{\'e}aux.
\newblock A survey on {S}eifert fiber space theorem.
\newblock {\em ISRN Geom.}, 2014:{ }9 pages, Art. ID 694106, 2014.

\bibitem{zbMATH03894059}
Peter {Scott}.
\newblock {The geometries of 3-manifolds.}
\newblock {\em {Bull. Lond. Math. Soc.}}, 15:401--487, 1983.

\bibitem{zbMATH02547002}
H.~{Seifert}.
\newblock {Topologie dreidimensionaler gefaserter R\"aume.}
\newblock {\em {Acta Math.}}, 60:147--238, 1933.

\bibitem{zbMATH03736614}
H.~{Seifert} and W.~{Threlfall}.
\newblock {\em Seifert and {T}hrelfall: a textbook of topology; and {S}eifert:
  topology of 3-dimensional fibered spaces}, volume~89 of {\em Pure and applied
  mathematics}.
\newblock Academic Press, New York, 1980.
\newblock Translators: Michael A. Goldman and Wolfgang Heil.

\bibitem{zbMATH03461149}
J.L. {Tollefson}.
\newblock {The compact 3-manifolds covered by $S^2 \times R^1$.}
\newblock {\em {Proc. Am. Math. Soc.}}, 45:461--462, 1975.

\bibitem{MR1194999}
Wilbur Whitten.
\newblock Recognizing nonorientable {S}eifert bundles.
\newblock {\em J. Knot Theory Ramifications}, 1(4):471--475, 1992.

\end{thebibliography}
\bibliographystyle{hplain}


\bigskip
\noindent
University of Hull

\noindent
Hull, HU6 7RX, UK

\smallskip
\noindent
\textit{jessica.banks[at]lmh.oxon.org}

\noindent
\textit{j.banks[at]hull.ac.uk}

\end{document}